\documentclass{amsart}
\usepackage{amssymb, amsmath, latexsym, mathabx, dsfont}
\usepackage{makecell,array,enumitem,multirow}
\usepackage{stmaryrd}

\linespread{1.3}
\renewcommand{\baselinestretch}{\baselinestretch}
\renewcommand{\baselinestretch}{1.1}
\numberwithin{equation}{section}

\newtheorem{thm}{Theorem}[section]

\newtheorem{cor}[thm]{Corollary}

\theoremstyle{definition}
\newtheorem{rmk}[thm]{Remark}
\theoremstyle{remark}

\newcommand{\ra}{{\ \rightarrow\ }}

\newcommand{\gen}{\text{gen}}

\newcommand{\z}{{\mathbb Z}}

\newcommand{\n}{{\mathbb N}}

\newcommand{\Mod}[1]{\ (\mathrm{mod}\ #1)}

\newcommand{\df}[1]{\langle #1 \rangle}

\begin{document}

\title{Prime-universal diagonal quadratic forms}

\author[Jangwon Ju and et tal] {Jangwon Ju, Daejun Kim, Kyoungmin Kim, Mingyu Kim, and Byeong-Kweon Oh}

\address{Department of Mathematics, University of Ulsan, Ulsan, 44610, Republic of Korea}
\email{jangwonju@ulsan.ac.kr}
\thanks{This work of the first author was supported by the National Research Foundation of Korea(NRF) grant funded by the Korea government(MSIT) (NRF-2019R1F1A1064037).}

\address{Research Institute of Mathematics, Seoul National University, Seoul 08826, Korea}
\email{goodkdj@snu.ac.kr}

\address{Department of Mathematics, Sungkyunkwan University, Suwon 16419, korea}
\email{kiny30@skku.edu}
\thanks{This work of the third author was supported by the National Research Foundation of Korea(NRF) grant funded by the Korea government(MSIT) (NRF-2016R1A5A1008055  and NRF-2018R1C1B6007778)}

\address{Department of Mathematics, Sungkyunkwan University, Suwon 16419, korea}
\email{kmg2562@skku.edu}
\thanks{This work of the fourth author was supported by the National Research Foundation of Korea (NRF-2019R1A6A3A01096245).}

\address{Department of Mathematical Sciences and Research Institute of Mathematics, Seoul National University, Seoul 08826, Korea}
\email{bkoh@snu.ac.kr}
\thanks{This work of the second and fifth authors were supported by the National Research Foundation of Korea (NRF-2019R1A2C1086347).}

\subjclass[2010]{Primary 11E12, 11E20}

\keywords{Diagonal quaternary quadratic forms, Prime-universal}

\begin{abstract}  A (positive definite and integral) quadratic form  is said to be {\it prime-universal} if it represents all primes.
Recently, Doyle and Williams in \cite{dw} classified all prime-universal  diagonal  ternary quadratic forms, and all prime-universal  diagonal  quaternary quadratic forms under two conjectures proposed by themselves.  
In this article, we classify all  prime-universal diagonal quadratic forms regardless of ranks. Furthermore, we prove, so called, $67$-Theorem for a diagonal quadratic form to be prime-universal.      
\end{abstract}

\maketitle

\section{Introduction}

A positive definite integral quadratic form 
$$
f(x_1,x_2,\dots,x_n)=\sum_{i,j=1}^n f_{ij}x_ix_j \quad (f_{ij}=f_{ji} \in \z)
$$
is said to be {\it prime-universal} if for any prime $p$, the diophantine equation 
$$
f(x_1,x_2,\dots, x_n)=p
$$ 
has an integer solution $(x_1,x_2,\dots,x_n) \in \z^n$.  There are many examples of prime-universal quadratic forms. For example, any quadratic form that represents all positive integers, which we call  a {\it  universal} quadratic form, is also prime-universal.  Hence by 15-Theorem  given by Conway and Schneeberger (for this, see \cite{b}), any quadratic form that represents all integers less than or equal to $15$ is universal, which is also prime-universal.  On the other hand, there are many prime-universal quadratic forms that are not universal.

Recently,  Doyle and Williams \cite{dw}  classified all  prime-universal diagonal ternary quadratic forms, which are, in fact, not universal. Furthermore, they classified all prime-universal diagonal quaternary quadratic forms under the two conjectures proposed by themselves.    

In this article, we classify all  prime-universal diagonal  quadratic forms without any assumption. Furthermore, we prove that  a diagonal quadratic form is prime-universal if and only if it represents 
$$
  2, \ 3, \ 5, \ 7, \ 13, \ 17, \ 23, \ 41, \ 43, \ \text{and} \ \ 67.
$$

Let $f=\sum f_{ij} x_ix_j$ be a (positive definite integral) quadratic form.  The symmetric matrix corresponding to $f$ is defined by $M_f=(f_{ij})$. If $f$ is diagonal, then we simply write  
$$
f=\df{f_{11},f_{22},\dots,f_{nn}}.
$$ 
Note that the symmetric matrix corresponding to a diagonal form is a diagonal matrix.  For a positive integer $a$,  we say $a$ is represented by $f$, denoted by $a \ra f$, if the diophantine equation $f(x_1,\dots,x_n)=a$ has an integer solution $(x_1,\dots,x_n) \in \z^n$.  For a prime $p$, if it has a solution $(x_1,\dots,x_n) \in \z_p$, where $\z_p$ is the $p$-adic integer ring, then we say $a$ is represented by $f$ over $\z_p$. Furthermore, if $a$ is represented by $f$ over $\z_p$  for any prime $p$, then we say $a$ is {\it locally} represented by $f$. 
The set of all integers that are represented by $f$ will be denoted by $Q(f)$. 
The genus of $f$, denoted by $\gen(f)$, is the set of quadratic forms that are isometric to $f$ over  $\z_p$ for any prime $p$. It is well known that an integer $a$ is locally represented by $f$ if and only if there is a quadratic form $g \in \gen(f)$ that represents $a$. 
The set of integers that are locally represented by $f$ will be denoted by $Q(\gen(f))$. 

Any unexplained notations and terminologies can be found in \cite{ki} or \cite{om}.

\section{Prime-universal diagonal quaternary quadratic forms}
In this section, we classify all prime-universal diagonal quaternary quadratic forms.   In \cite{dw}, the authors classified all prime-universal diagonal quaternary quadratic forms except for the following $27$ forms:
\begin{equation}\label{candi}
\begin{array}{llll}
\df{2,3,4,5}, &\df{2,3,4,11}, & \df{2,3,5,5}, & \df{2,3,5,11}, \\
\df{2,3,5,13},&\df{2,3,5,14}, & \df{2,3,5,16},& \df{2,3,5,17}, \\
\df{2,3,5,h} & \multicolumn{3}{l}{\text{for $h=20,\dots,27,29,\dots,33,35,36,38,40,41,43$.}}
\end{array}
\end{equation}
Hence it suffices to prove the prime-universalities  of these  $27$ diagonal forms.  

In \cite{poly}, \cite{regular}, and \cite{pentagonal}, we developed a method that determines whether or not integers in an arithmetic progression are represented by some particular ternary quadratic form. 
We briefly introduce this method for those who are unfamiliar with it. 
Let $d$ be a positive integer and let $a$ be a nonnegative integer $(a\leq d)$. 
We define 
$$
S_{d,a}=\{dn+a \mid n \in \mathbb N \cup \{0\}\}.
$$
For two quadratic forms $f$ and $g$, we define
$$
R(g,d,a)=\{v \in (\mathbb{Z}/d\mathbb{Z})^3 \mid vM_gv^t\equiv a \ (\text{mod }d) \}
$$
and
$$
R(f,g,d)=\{T\in M_3(\mathbb{Z}) \mid  T^tM_fT=d^2M_g \}.
$$
A coset (or, a vector in the coset) $v \in R(g,d,a)$ is said to be {\it good} with respect to $f,g,d, \text{ and }a$ if there is a $T\in R(f,g,d)$ such that $\frac1d \cdot vT^t \in \mathbb{Z}^3$.  
The set of all good vectors in $R(g,d,a)$ is denoted by $R_f(g,d,a)$.   
If  $R(g,d,a)=R_f(g,d,a)$, we write  
$$
g\prec_{d,a} f.
$$ 
If $g\prec_{d,a} f$, then by Lemma 2.2 of \cite{regular},  we have 
\begin{equation}\label{good}
S_{d,a}\cap Q(g) \subset Q(f).
\end{equation}
In particular, if $g\prec_{d,a} f$ for any $g \in \gen(f)$, then we have $Q(\gen(f)) \cap S_{d,a} \subset Q(f)$. 

 A MAPLE based  computer program for computing the above sets  is available upon request to the authors.

\begin{thm} \label{start} Both quaternary quadratic forms $f=\langle 2,3,4,5\rangle$ and $\langle2,3,4,11\rangle$  are prime-universal. 
\end{thm}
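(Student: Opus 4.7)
The plan is to establish prime-universality by descent to a ternary sub-form. For either $f=\langle 2,3,4,5\rangle$ or $f=\langle 2,3,4,11\rangle$, set the ternary form $g=\langle 2,3,4\rangle$ and the coefficient $c=5$ or $c=11$ respectively. Given a prime $p$, the goal is to produce a small integer $w_0$ such that the reduction $p-cw_0^2$ is represented by $g$; this immediately yields $p \in Q(f)$. Local representability of every prime by $f$ is essentially automatic from the small discriminant of $f$ (the only nontrivial checks are at the primes dividing the discriminant), so the whole task becomes the global representation of $p-cw_0^2$ by the ternary form $g$.

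To implement the reduction, I would first describe $Q(\gen(g))$ for $g=\langle 2,3,4\rangle$ by the standard local recipe; since $g$ has discriminant $24$ its genus admits an explicit congruence description. Second, for the pairs $(d,a)$ arising in the argument (where $d$ is a power of a small prime and $a$ ranges over the targeted residue classes), I would verify $g'\prec_{d,a} g$ for every $g'\in\gen(g)$. By the inclusion \eqref{good} recalled in Section~2, this yields $Q(\gen(g))\cap S_{d,a}\subset Q(g)$. Third, I would partition the primes $p$ into residue classes modulo a suitable modulus (divisible by $2$, $3$, and $c$), and in each class exhibit an explicit small $w_0$ so that $p-cw_0^2$ lies in one of the certified progressions and is locally represented by $g$ at every completion.

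The most delicate point will be the exceptional set of $g=\langle 2,3,4\rangle$, i.e., integers locally but not globally represented. In unlucky congruence classes of $p$, a single fixed $w_0$ may force $p-cw_0^2$ into this exceptional set, and one must enlarge the list of candidate values $w_0 \in \{0,1,2,\dots\}$ (or combine different values across sub-classes of $p$) to avoid it. Once a finite collection of $w_0$-choices covering every residue class of $p$ is secured via the $\prec_{d,a}$ machinery, the only remaining primes are finitely many small $p$ for which the congruence reductions do not yet apply; these are dispatched by exhibiting explicit representations $p = 2x^2+3y^2+4z^2+cw^2$ by direct inspection. The bulk of the labor is thus a finite but nontrivial collection of $\prec_{d,a}$ verifications, carried out by the MAPLE routine described in the introduction to Section~2.
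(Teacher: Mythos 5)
Your plan coincides with the paper's proof: both descend to the ternary form $g=\langle 2,3,4\rangle$, certify that every even integer locally represented by $g$ (i.e., every even number not of the form $2^{2s+1}(8t+5)$) is globally represented, then choose a small odd $d$ according to the residue class of $p$ modulo a power of $2$ so that $p-cd^2\ra g$, dispatching the finitely many small primes by direct inspection. The only cosmetic difference is that where you invoke machine-verified relations $g'\prec_{d,a}g$ for the genus mate, the paper gets $Q(\gen(g))\cap 2\z\subset Q(g)$ by the one-line observation that an even number represented by the genus mate $\langle 1,2,12\rangle$ forces the first variable to be even, hence is represented by $\langle 4,2,12\rangle$, a subform of $\langle 2,3,4\rangle$.
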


\begin{proof}   Since the proofs are quite similar to each other, we only provide the proof of the former case.

Note that the class number of $g=\langle 2,3,4\rangle$ is $2$, and its genus mate is $\df{1,2,12}$. If an even integer is represented by $\df{1,2,12}$, then it is also represented by $\df{4,2,12}$, which is a subform of $g$. Therefore $Q(\gen(g)) \cap 2\z \subset Q(g)$, and consequently  $g$    
represents all nonnegative even integers except for those integers of the form $2^{2s+1}(8t+5)$ for some nonnegative integers $s$ and $t$.  

Since one may directly check that $p \ra f$ for any prime $p$ such that $p\le 113$, we assume that $p\ge 127$.
Then, one may easily check that
$$
p-5\cdot d^2 \ra g,
$$
where 
$$
d=\begin{cases}  1 \qquad &\text{if $p \equiv 1,3 \Mod 8$,  $p \equiv 7\Mod {16}$, \text{or} \ $p \equiv 21\Mod {64}$}, \\
                           5 \qquad &\text{if $p \equiv 13, 45  \Mod {64}$,}\\
                           3 \qquad &\text{otherwise.}
                           \end{cases}
$$
This completes the proof. 
\end{proof}


\begin{thm}\label{thm235alpha}
For an integer $\alpha$ that is not divisible by $7$ with $5\leq \alpha\leq 43$,  the quaternary quadratic form $f=\df{2,3,5,\alpha}$ listed in \eqref{candi} is prime-universal. 
\end{thm}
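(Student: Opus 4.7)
The plan is to adapt the strategy of Theorem~\ref{start}, using the common ternary subform $g = \df{2,3,5}$ of every quaternary form $f = \df{2,3,5,\alpha}$ on the list. For each prime $p$ one seeks an integer $d$, depending only on $\alpha$ and on the residue of $p$ modulo a fixed modulus $N_\alpha$, such that $p - \alpha d^2 \geq 0$ and $p - \alpha d^2 \in Q(g)$; then $f(x,y,z,d) = p$ has an integer solution. Primes up to some explicit cutoff are handled by direct computer search, so the substance of the argument is to construct such a $d$ for all sufficiently large $p$.

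First I would pin down $Q(g)$ using the machinery recalled from \cite{poly,regular,pentagonal}. Concretely, compute $\gen(g)$ and, for each genus mate $g'$, find moduli $d$ and residues $a$ with $g \prec_{d,a} g'$, which by \eqref{good} yields $Q(\gen(g)) \cap S_{d,a} \subset Q(g)$ on the corresponding progressions. Since the local conditions for $g = \df{2,3,5}$ at the primes $2$, $3$, and $5$ are transparent, this produces explicit arithmetic progressions along which every positive integer is represented by $g$ itself.

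For each of the $22$ admissible values of $\alpha$, I would then partition primes $p$ (above the cutoff) into residue classes modulo a suitable $N_\alpha$ built out of $2$, $3$, $5$, and $7$, and in each class exhibit a small nonnegative $d$ for which $p - \alpha d^2$ lies in one of the admissible progressions and is locally represented by $g$ at $2$, $3$, $5$. The hypothesis $7 \nmid \alpha$ enters precisely here: since $d^2 \bmod 7 \in \{0,1,2,4\}$, varying $d$ shifts $\alpha d^2 \bmod 7$ through four distinct residues when $7 \nmid \alpha$, giving exactly the flexibility needed to steer $p - \alpha d^2$ into a residue class where $g$ is known to represent everything locally representable; if $7 \mid \alpha$ this flexibility disappears and those cases must be treated separately.

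The main obstacle is the volume of bookkeeping, since each of the $22$ values of $\alpha$ generates its own table of residues $p \bmod N_\alpha$ together with matched $d$'s, and for every entry one must check that $p - \alpha d^2$ both lands in an admissible progression for $g$ and satisfies the local solvability conditions at $2$, $3$, and $5$. These verifications are finite but lengthy, and I would lean on the MAPLE program cited after \eqref{good} to organize them. A secondary but essential point is to fix an explicit threshold beyond which the chosen small $d$ always yields $p - \alpha d^2 > 0$, and to hand-check the finitely many primes below this threshold, exactly as in Theorem~\ref{start}.
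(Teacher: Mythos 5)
Your proposal is correct and follows essentially the same route as the paper: the paper checks $g_1\prec_{7,a}g$ for the single genus mate $g_1=\df{1,1,30}$ of $g=\df{2,3,5}$ and $a\in\{0,3,5,6\}$, then for $p>36\alpha$ subtracts $\alpha\cdot 3^2\cdot d^2$ with $d\in\{1,2\}$ chosen by $p\bmod 7$ --- the factor $3^2$ keeping $p-9\alpha d^2\equiv p\not\equiv 0\Mod{3}$, and $7\nmid\alpha$ supplying exactly the mod-$7$ flexibility you describe --- so one uniform table handles all $22$ values of $\alpha$ at once, making your per-$\alpha$ moduli $N_\alpha$ and $22$ separate tables unnecessary. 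One notational slip: to conclude $Q(\gen(g))\cap S_{d,a}\subset Q(g)$ via \eqref{good} you need the genus mate on the left, i.e.\ $g'\prec_{d,a}g$, not $g\prec_{d,a}g'$.
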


\begin{proof}
Note that the class number of $g=\df{2,3,5}$ is $2$, and its genus mate is $g_1=\df{1,1,30}$.
One may easily check that
$$
g_1\prec_{7,0}g,\quad g_1\prec_{7,3}g,\quad g_1\prec_{7,5}g,\quad\text{and}\quad g_1\prec_{7,6}g.
$$
Therefore, for any $a\in\{0,3,5,6\}$, $Q(\text{gen}(g))\cap S_{7,a} \subset Q(g)$.
Consequently, $g$ represents all nonnegative integers that are not divisible by $3$, and are congruent to $0,3,5$, or $6$ modulo $7$.

Since one may directly check that $p \ra f$ for any prime $p$ such that $p\le 36\alpha$, we assume that $p> 36\alpha$.
If $p$ is congruent to $3,5$, or $6$ modulo $7$, then $p$ is represented by $g$.
Assume that $p$ is congruent to $1,2$, or $4$ modulo $7$. 
Then, one may easily check that
$$
p-\alpha\cdot3^2\cdot d^2\ra g,
$$
where
$$
d=\begin{cases} 1 \qquad &\text{if $p-2\alpha\equiv0,3,5$, or $6 \Mod 7$},\\
2 \qquad &\text{otherwise}.
\end{cases} 
$$
This completes the proof.   \end{proof}


\begin{thm} \label{thm23514} The quaternary quadratic form $f=\df{2,3,5,14}$ is prime-universal.
\end{thm}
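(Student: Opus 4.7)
The plan is to adapt the method of Theorem~\ref{thm235alpha}, whose key step is the decomposition $p=g(x,y,z)+\alpha\cdot 3^2 d^2$ with $g=\langle 2,3,5\rangle$ and a choice of $d\in\{1,2\}$ that shifts $p$ into the admissible residue class $\{0,3,5,6\}\pmod 7$ for $g$. Since $7\mid 14$, one has $p-14\cdot 3^2 d^2\equiv p\pmod 7$, so that trick can never shift a prime $p\equiv 1,2,4\pmod 7$ into the good class. I will therefore split by the residue of $p$ modulo $7$, handling small primes by direct verification. When $p\equiv 3,5,6\pmod 7$ is sufficiently large, taking $w=0$ gives $p-14w^2=p$, and the representability result for $g$ established in the proof of Theorem~\ref{thm235alpha} (every nonnegative integer $\not\equiv 0\pmod 3$ lying in $\{0,3,5,6\}\pmod 7$ is represented by $g$) already yields $p\ra g$ and hence $p\ra f$.

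The new work is the case $p\equiv 1,2,4\pmod 7$. Here I plan to pull out a different variable of $f$ and replace $g$ by a ternary subform whose leading coefficients are all coprime to $7$; the natural candidate is $h=\langle 2,3,14\rangle$, corresponding to the decomposition $p-5z^2\ra h$. Using the framework of Section~2, I will compute the class number of $h$, identify the members of $\gen(h)$, and establish $h'\prec_{d,a}h$ for every $h'\in\gen(h)$ over a set of pairs $(d,a)$ rich enough to give $Q(\gen(h))\cap S_{d,a}\subset Q(h)$ on the residue classes I need. Since $5z^2\pmod 7\in\{0,3,5,6\}$, the residue $p-5z^2\pmod 7$ can always be steered into an admissible class; combined with congruence conditions modulo $8$ and modulo $3$ used to ensure local representability by $h$, this should allow me to exhibit, for each sufficiently large prime $p\equiv 1,2,4\pmod 7$, a value $z$ drawn from an explicit finite set (depending on $p$ modulo a small modulus) with $p-5z^2\in Q(\gen(h))\cap S_{d,a}\subset Q(h)$, whence $p\ra f$.

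The main technical obstacle is the ternary computation for $\langle 2,3,14\rangle$: establishing its class number, listing its genus mates, and verifying enough good-coset relations $\prec_{d,a}$ to cover the residues of $p-5z^2$ modulo the primes where local obstructions can appear. Once these ternary data are in hand (produced by the MAPLE program mentioned by the authors), selecting an admissible $z$ for each residue class of $p$ reduces to a routine congruence check in the style of Theorem~\ref{thm235alpha}, and the threshold below which one verifies small primes by hand is determined only by the explicit choice of $z$.
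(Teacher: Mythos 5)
Your proposal takes essentially the same route as the paper: its proof also rests on the ternary form $g=\df{2,3,14}$, computing that its class number is two with genus mate $g_1$, verifying the good-coset relations $g_1\prec_{8,3}g$, $g_1\prec_{32,12}g$, and $g_1\prec_{32,16}g$, and then representing $n=p-5\cdot 3^2\cdot d^2$ by $g$ with $d\in\{0,1,2,3,5,7\}$ chosen according to $p$ modulo $32$ (the factor $3$ in $3d$ keeping $n\equiv p\not\equiv 0 \Mod 3$). Your preliminary split modulo $7$ via $\df{2,3,5}$ is harmless but unnecessary --- indeed $\df{2,3,14}$ has no $7$-adic obstruction, so no ``steering'' modulo $7$ is needed and the paper treats all residue classes uniformly, allowing $d=0$.
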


\begin{proof}
Note that the class number of $g=\df{2,3,14}$ is two and its genus mate is
$$
g_1=\begin{pmatrix} 1&0&0\\0&10&4\\0&4&10\end{pmatrix}.
$$
One may easily check that
$$
g_1\prec_{8,3}g,\quad g_1\prec_{32,12}g\quad \text{and}\quad g_1\prec_{32,16}g.
$$
Thus if we put
$$
A=\left\{ m\in \n : m\equiv 3\Mod 8 \ \ \text{or}\ \ m\equiv 12\Mod {32}\ \ \text{or}\ \ m\equiv 16\Mod {32} \right\},
$$
then we have $Q(\gen(g))\cap A\subset Q(g)$.
Consequently, $g$ represents all integers in $A$ except for those of the form $3^{2s+1}(3t+2)$ for some nonnegative integers $s$ and $t$.
One may check directly that $p\ra g$ for any prime $p<2207$. Thus we assume that $p$ is a prime greater than or equal to  $2207$.
If we define
$$
d=\begin{cases} 0&\ \text{if}\ p\equiv 3\Mod 8,\\
                           1&\ \text{if}\ p\equiv 25\ \text{or}\ 29\Mod {32},\\
                           2&\ \text{if}\ p\equiv 7\Mod 8,\\
                           3&\ \text{if}\ p\equiv 1\ \text{or}\ 5\Mod {32},\\
                           5&\ \text{if}\ p\equiv 17\ \text{or}\ 21\Mod {32},\\
                           7&\ \text{if}\ p\equiv 9\ \text{or}\ 13\Mod {32},
                           \end{cases}
$$
then one may easily check that 
$$
n=p-5\cdot 3^2\cdot d^2\in A \quad \text{and} \quad n \not \equiv 0 \Mod 3.
$$
Thus we have $n\ra g$, which completes the proof.
\end{proof}


\begin{thm} \label{thm23521} The quaternary quadratic form $f=\df{2,3,5,21}$ is prime-universal. 
\end{thm}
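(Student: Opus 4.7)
The plan is to follow the same ternary-subform template used in Theorems~\ref{start}--\ref{thm23514}, but with a subform of $f$ other than $\langle 2,3,5\rangle$. The obstruction to reusing $\langle 2,3,5\rangle$ is that $21$ is divisible by both $3$ and $7$, so shifting $p$ by multiples of $21 d^2$ cannot change $p \bmod 7$; hence for $p \equiv 1, 2, 4 \Mod 7$ the argument of Theorem~\ref{thm235alpha} would immediately break. The natural replacement is to fix a different variable of $f$, for instance $g = \langle 2, 3, 21\rangle$, so that
\[
f(x_1, x_2, x_3, x_4) = g(x_1, x_2, x_4) + 5 x_3^2,
\]
and to arrange $p - 5 d^2 \ra g$ for a suitable $d$.

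First I would compute the class number of $g = \langle 2, 3, 21 \rangle$, identify its genus mate $g_1$, and run the MAPLE routine referenced after \eqref{good} to verify relations of the form $g_1 \prec_{d, a} g$ for a useful list of pairs $(d,a)$ (likely with $d$ a small divisor of $8 \cdot 3 \cdot 7$). Each such relation gives $Q(\gen(g)) \cap S_{d, a} \subset Q(g)$, and combining these with the local obstructions at $2$, $3$, $5$, $7$ will describe an explicit set $A \subset Q(g)$ consisting of a union of arithmetic progressions with certain $3$-adic and $7$-adic power restrictions removed.

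Second, I would bound $p$ above by some explicit constant (say a few thousand, depending on where the argument requires $p - 5 d^2 > 0$) and check those primes by direct computer search. For $p$ above this bound, I would partition the residues of $p$ modulo the controlling modulus (a divisor of $8 \cdot 21$) and, in each class, write down an explicit choice of $d$ making $p - 5 d^2 \in A$. Since $5 d^2 \bmod 7$ runs through $\{0, 5, 3, 6\}$ as $d \in \{0, 1, 2, 3\}$, the mod-$7$ coverage is already flexible enough to hit every desired residue; the mod-$8$ and mod-$3$ choices then need to be coordinated to avoid the bad $3$-adic and $7$-adic progressions of $g$.

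The hard part will be the local analysis at $3$ and $7$. Because $g = \langle 2, 3, 21\rangle$ is anisotropic at both primes, $A$ will have to exclude progressions of the shape $3^{2s+1}(3t + r)$ and $7^{2s+1}(7t + r)$, and a careless choice of $d$ can easily land $p - 5d^2$ in one of them. If for some residue class no single $d$ works, the backup is to combine $g = \langle 2, 3, 21 \rangle$ with a second ternary subform such as $\langle 2, 5, 21 \rangle$ (fixing $x_2$ and subtracting $3 e^2$) or $\langle 3, 5, 21\rangle$ (fixing $x_1$ and subtracting $2 b^2$), and to split the proof into two residue regimes, each handled by one of the subforms exactly as in the proof of Theorem~\ref{thm23514}.
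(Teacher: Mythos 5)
Your overall template (subtract $c\,d^2$ for a suitable coefficient $c$ of $f$, land in an explicitly described subset of $Q(g)$ for a ternary subform $g$) is the right family of argument, and your diagnosis of why Theorem~\ref{thm235alpha} breaks for $\alpha=21$ is correct. But as written your proposal has two genuine problems. First, the decisive step is deferred rather than supplied: everything hinges on the class number of $\df{2,3,21}$, on the existence of enough relations $g_1\prec_{d,a}g$, and on a residue-by-residue choice of $d$, all of which you leave to a future computation, with an explicit fallback in case they fail. The paper's actual proof suggests this is not a formality: for this theorem the authors abandon the $\prec_{d,a}$ machinery altogether. They take $g=\df{3,5,21}$, which has class number $3$ with genus mates $g_1$ and $g_2$, prove directly that every $m\equiv 2\Mod 3$ with $\gcd(m,35)=1$ is represented by $\gen(g)$, choose $d$ according to $p$ modulo $12$ so that $n=p-2\cdot 35^2\cdot d^2\equiv 5\Mod{12}$ (the factor $35^2$ keeps $n$ coprime to $35$ for free), and then bridge from the genus to $g$ itself by explicit changes of variables: a parity analysis using $n\equiv 1\Mod 4$ shows that any representation of $n$ by $g_1$ or $g_2$ can be rewritten as a representation by $g$. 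Your backup plan cites Theorem~\ref{thm23514} as the model for using $\df{3,5,21}$, but that theorem's proof relies on a class-number-$2$ genus and $\prec_{d,a}$ relations; with class number $3$ the paper needed this different bridging idea, which your proposal does not contain.

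Second, your local analysis is wrong at $7$: the form $\df{2,3,21}$ is \emph{isotropic} over $\q_7$, since $2x^2+3y^2$ is isotropic there ($-6\equiv 1\Mod 7$ is a square), and in fact the unimodular part $\df{2,3}$ is a $7$-adic hyperbolic plane, so $g$ represents every element of $\z_7$ and no excluded $7$-adic progressions of the shape $7^{2s+1}(7t+r)$ arise at all. The only anisotropic odd prime is $3$, where the exclusion is of the type $9^s(3t+1)$; choosing $d$ so that $p-5d^2\equiv 2\Mod 3$ avoids it entirely, so the ``hard part'' you flag is partly illusory. This error does not doom the strategy (it makes it easier), but it shows the local computation underpinning your choice of progressions was not actually carried out, which, combined with the unverified $\prec_{d,a}$ relations, leaves the proposal a plan rather than a proof.
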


\begin{proof}
One may check directly that $f$ represents  all primes less than $88211$.
Thus we assume  that $p$ is a prime greater than or equal to $88211$.
Note that the class number of $g=\df{3,5,21}$ is three and its genus mates are
$$
g_1=\begin{pmatrix} 5&0&0\\0&6&3\\0&3&12\end{pmatrix} \ \ \text{and}\ \ g_2=\begin{pmatrix} 2&1&0\\1&8&0\\0&0&21\end{pmatrix}.
$$
One may easily show that any positive integer $m$ which is congruent to 2 modulo 3 and relatively prime to 35 is represented by the genus of $g$.
If we define
$$
d=\begin{cases} 2&\ \text{if}\ p\equiv 1\Mod {12},\\
                           6&\ \text{if}\ p\equiv 5\Mod {12},\\
                           1&\ \text{if}\ p\equiv 7\Mod {12},\\
                           3&\ \text{if}\ p\equiv 11\Mod {12}, \end{cases}
$$
and 
$$
n=p-2\cdot 35^2\cdot d^2,
$$
then one may easily check that  $n$ is a positive integer congruent to $5$ modulo $12$ and is relatively prime to $35$. 
Thus we have
$$
n\equiv 1\Mod 4\quad  \text{and}  \quad n\ra \gen(g).
$$
Hence the integer $n$ is represented by a quadratic form in the genus of $g$. 
Suppose that $n\ra g_1$. Then there is a vector $(x,y,z)\in \z^3$ such that
$$
n=5x^2+6y^2+12z^2+6yz.
$$
Since $n\equiv 1\Mod 4$, we have either $y\equiv 0\Mod 2$ or $yz\equiv 1\Mod 2$.
If $y=2y'$ for some $y'\in \z$, then we have
$$
\begin{array}{rl}
n\!\!\!\!\!&=5x^2+24y'^2+12z^2+12y'z \\[0.4em]
&=3(y'+2z)^2+5x^2+21y'^2
\end{array}
$$
which means that $n\ra g$.
If $yz\equiv 1\Mod 2$, then we may write $y=z-2y'$.
Then we have
$$
\begin{array}{rl}
n\!\!\!\!\!&=5x^2+6(z-2y')^2+12z^2+6(z-2y')z \\[0.4em]
&=5x^2+24y'^2+24z^2-36y'z \\[0.4em]
&=3(y'+z)^2+5y^2+21(y'-z)^2
\end{array}
$$
and thus $n\ra g$.
Similarly, one may show that the supposition $n\ra g_2$ also implies that $n\ra g$.
This completes the proof. 
\end{proof}


\begin{thm} \label{thm23535} The quaternary quadratic form $f=\df{2,3,5,35}$ is prime-universal.
\end{thm}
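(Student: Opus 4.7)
The plan is to follow the template established by Theorems~\ref{thm23514} and~\ref{thm23521}. The first step is to select a ternary subform $g$ of $f$ by dropping one of the four diagonal coefficients. The natural candidates are $\df{3,5,35}$, $\df{2,5,35}$, $\df{2,3,35}$, and $\df{2,3,5}$; the choice is governed by the need to control local representability simultaneously at the primes $2$, $3$, $5$, and $7$. Note in particular that $g = \df{2,3,5}$ cannot work by itself, since the only available coefficient to subtract is $35$, and $p - 35 d^2 \equiv p \Mod 7$, so this choice fails to shift $p$ out of the residues $1, 2, 4 \Mod 7$ which are not represented by $\df{2,3,5}$ (as in Theorem~\ref{thm235alpha}). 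Consequently I expect the right choice to be one of $\df{3,5,35}$, $\df{2,5,35}$, or $\df{2,3,35}$, in analogy with the proofs for $\alpha = 14$ and $\alpha = 21$.

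Once $g$ is fixed, I would compute its class number and genus mates, and apply the $\prec_{d,a}$ machinery described in the introduction of this section to identify arithmetic progressions $S_{d,a}$ on which $Q(\gen(g)) \cap S_{d,a} \subseteq Q(g)$. Combining this with the usual local-to-global analysis for ternary forms produces an explicit description of the set of integers represented by $g$, modulo a thin exceptional set of the form $q^{2s+1}(\cdot)$ at the anisotropic primes $q$.

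Next, I would verify by direct computation that $p \ra f$ for every prime $p$ below an explicit threshold $N$. For $p \geq N$, I would choose, depending on the residue class of $p$ modulo an appropriate divisor of $2^a \cdot 3 \cdot 5 \cdot 7$, a coefficient $c \in \{2,3,5,35\}$ and an integer $d$, and verify that $n := p - c d^2$ is positive, lies in a good arithmetic progression for $g$, and avoids the exceptional set. Then $n \ra g$ implies $p \ra f$ after restoring the variable with coefficient $c$ to the value $d$. Should the direct argument fail for some residue class, I would argue as in the proof of Theorem~\ref{thm23521}: observe that $n \ra \gen(g)$, consider the finitely many genus mates $g_i$ that could carry the representation, and for each exhibit an explicit integral linear substitution converting a representation by $g_i$ into one by $g$ itself.

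The main obstacle will be the local analysis at the primes $5$ and $7$. Because $35 = 5 \cdot 7$, both primes enter the discriminant of $g$ and may produce anisotropic $\z_p$ components for some choices of $g$; consequently the genus of $g$ is liable to have several classes, and the $\prec_{d,a}$ verifications must jointly cover every residue class that a large prime can occupy. Pinning down the right $g$ and then finding a piecewise choice of $(c, d)$ that uniformly handles all residue classes is where the real combinatorial work lies, and one should expect $N$ to be rather large (comparable to the $88211$ of Theorem~\ref{thm23521}); once these choices are in place the remaining verifications reduce to routine finite computations supported by the MAPLE program mentioned in the introduction.
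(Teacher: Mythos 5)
Your plan is aimed correctly: the paper's proof of this theorem is exactly the Theorem~\ref{thm23521} template you describe, with $g=\df{3,5,35}$ (drop the coefficient $2$), $n=p-2\cdot 21^2\cdot d^2$ so that $n$ stays coprime to $21$, the value $d\in\{1,2,5,10,13,26\}$ chosen as a function of $p \Mod{20}$ so as to force both $n\equiv 3\Mod 4$ and $n\equiv 2,3\Mod 5$, the local observation that the genus of $g$ represents every positive integer $\equiv 2,3 \Mod 5$ prime to $21$, a direct check of all primes below $596243$, and finally a transfer of the representation from either genus mate $g_1,g_2$ to $g$ by explicit substitutions as in Theorem~\ref{thm23521}; notably, the paper does not need the $\prec_{d,a}$ machinery here at all. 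Your exclusion of $g=\df{2,3,5}$ is the right instinct, though your parenthetical overstates it: Theorem~\ref{thm235alpha} does not show that integers $\equiv 1,2,4\Mod 7$ are \emph{not} represented by $\df{2,3,5}$ (the prime $2\equiv 2\Mod 7$ is), only that representation in those classes is not guaranteed by the regularity argument, so the method loses control there.

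The genuine gap is that nothing beyond this aiming is actually established. You do not fix $g$ among your three candidates, do not prove the genus-representation statement, do not exhibit the residue-to-$(c,d)$ table (whose existence is the combinatorial heart: one must simultaneously hit a fixed class modulo $4$ and the classes $2,3$ modulo $5$ while keeping $n$ positive and coprime to $21$), and, most importantly, you do not supply the substitutions converting a representation by a genus mate into one by $g$. The phrase ``as in the proof of Theorem~\ref{thm23521}'' does not discharge this step: the mates of $\df{3,5,35}$ are different forms, and one must discover which single congruence condition on $n$ forces the parity constraints on the variables that make the identities work for both $g_1$ and $g_2$ at once --- here it is $n\equiv 3\Mod 4$, not the $n\equiv 1\Mod 4$ of Theorem~\ref{thm23521}, and there is no a priori guarantee that any one condition serves both mates. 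Since the theorem's entire content lies in these verifications, what you have is a correct blueprint for the paper's argument rather than a proof.
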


\begin{proof}
One may directly check that $f$ represents all primes less than $596243$.
Thus we assume that $p$ is a prime greater than or equal to $596243$.
Note that the class number of $g$ is three and its genus mates are 
$$
g_1= \begin{pmatrix} 3&0&0\\0&10&5\\0&5&20\end{pmatrix}
\ \ \text{and}\ \ g_2=\begin{pmatrix} 2&1&0\\1&8&0\\0&0&35\end{pmatrix}.
$$
One may easily show that any positive integer $m$ which is congruent to 2 or 3 modulo 5, and is relatively prime to 21 is represented by the genus of $g$.
If we define
$$
d=\begin{cases} 13&\ \text{if}\ p\equiv 1\Mod {20},\\
                           10&\ \text{if}\ p\equiv 3\ \text{or}\ 7\Mod {20},\\
                           1&\ \text{if}\ p\equiv 9\Mod {20},\\
                           2&\ \text{if}\ p\equiv 11\Mod {20},\\
                           5&\ \text{if}\ p\equiv 13\ \text{or}\ 17\Mod {20},\\
                           26&\ \text{if}\ p\equiv 19\Mod {20},\\
                           \end{cases}  \quad \text{and} \quad n=p-2\cdot 21^2\cdot d^2,
$$
 then one may easily check that  
$$
n\equiv 3\Mod 4,\ \ n\ra \gen(g).
$$
For each $i=1,2$, one may easily show that any positive integer $m$ represented by $g_i$ which is congruent to 3 modulo 4 is also represented by $g$ as in the proof of Theorem \ref{thm23521}.
Therefore we have $n\ra g$, which implies that $p\ra f$.
\end{proof}

\section{Higher dimensional prime-universal diagonal quadratic forms}

 A prime-universal diagonal quadratic form  is called {\it proper} if there does not exist a prime-universal diagonal subform  whose rank is less than the rank of it.  Note that to find all prime-universal diagonal quadratic forms, it suffices to find all proper ones.  In this section, we classify all proper prime-universal diagonal quadratic forms with ranks greater than $4$. 

If a diagonal quadratic form $f$ fails to represent a prime, we call the smallest prime not represented by $f$ its {\em prime truant}.

According to the results in \cite{dw}, there are no prime-universal diagonal binary quadratic forms, and there are five proper prime-universal diagonal ternary quadratic forms, namely,
$$
\df{1,1,2}, \quad \df{1,1,3}, \quad \df{1,2,3}, \quad \df{1,2,4}, \quad \text{and} \quad \df{1,2,5}.
$$
Moreover, as proved in the previous section,  all prime-universal diagonal quaternary quadratic forms with minimum greater than $1$ are listed in Table 1 of \cite{dw}.
Therefore, it suffices to classify all proper prime-universal diagonal quadratic forms with rank greater than or equal to $5$, which are, in fact, listed in Table \ref{HigherCandidate}.

\begin{table}[ht]
\caption{Proper prime-universal forms $\langle a_1,a_2,\dots,a_k\rangle \ \ (5\le k\le6)$} 
\label{HigherCandidate}
	\begin{center}
		\begin{tabular}{ccccccc}
			\Xhline{1pt}
			$a_1$&$a_2$&$a_3$&$a_4$&$a_5$&$a_6$& Conditions on $a_k  \ (5\le k\le6)$ \\
			\hline
			
			$2$&$2$&$2$&$2$&$3$&& \\
			$2$&$2$&$2$&$3$&$a_5$&& $a_5 = 8,11,17$\\
			
			\hline
			
			$2$&$2$&$3$&$8$&$17$&& \\
			$2$&$2$&$3$&$11$&$17$&& \\
			$2$&$2$&$3$&$16$&$17$&& \\
			$2$&$2$&$3$&$17$&$a_5$&& $17\le a_5 \le 41$, $a_5\neq 26,32,35,40$\\
			
			\hline
			
			$2$&$3$&$3$&$3$&$4$&& \\
			$2$&$3$&$3$&$4$&$a_5$&& $a_5 = 4,6,10,13$\\
			
			\hline
			$2$&$3$&$4$&$4$&$a_5$&& $4\le a_5 \le 17$, $a_5\neq 5,8,9,11,16$\\
			$2$&$3$&$4$&$6$&$a_5$&& $6\le a_5 \le 23$, $a_5\neq 8,9,11,22$\\
			$2$&$3$&$4$&$7$&$a_5$&& $7 \le a_5 \le 17$, $a_5\neq 8,9,11,16$ \\
			$2$&$3$&$4$&$10$&$a_5$&& $10\le a_5 \le 23$, $a_5\neq 11,22$\\
			$2$&$3$&$4$&$12$&$13$&& \\
			$2$&$3$&$4$&$13$&$a_5$&& $13 \le a_5 \le 23$, $a_5\neq 13,22$\\
			
			\hline	
			
			$2$&$3$&$5$&$7$&$a_5$&& $a_5=7,19,28,34$ \\
			$2$&$3$&$5$&$19$&$19$&&  \\
			
			\hline
			$2$&$3$&$6$&$6$&$7$&&  \\			
			$2$&$3$&$6$&$7$&$a_5$&& $7\le a_5 \le 23$, $a_5\neq 8,19,20,22$  \\			
			$2$&$3$&$6$&$7$&$19$&$20$& \\
			$2$&$3$&$6$&$7$&$20$&$a_6$& $20 \le a_6 \le 67$, $a_6\neq 21,23,63,66$ \\
			
			\hline
			
			$2$&$3$&$7$&$7$&$a_5$&& $7\le a_5 \le 13$, $a_5\neq 7,8,9,10,12$ \\
			$2$&$3$&$7$&$7$&$7$&$10$&  \\			
			$2$&$3$&$7$&$9$&$a_5$&& $9\le a_5 \le 13$, $a_5\neq 9,12$ \\		
			$2$&$3$&$7$&$10$&$a_5$&& $10\le a_5 \le 23$, $a_5\neq 17,19,22$ \\
			$2$&$3$&$7$&$11$&$a_5$&& $11\le a_5 \le 17$, $a_5\neq 11,13,16$ \\
			$2$&$3$&$7$&$12$&$13$&& \\
			$2$&$3$&$7$&$13$&$a_5$&& $13\le a_5 \le 17$, $a_5\neq 13,16$ \\
			\Xhline{1pt}
		\end{tabular}
	\end{center}
\end{table}

\begin{thm}\label{HigherproperPU}
	Every diagonal quadratic form in Table \ref{HigherCandidate} is, in fact, proper prime-universal.
\end{thm}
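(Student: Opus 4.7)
The plan is to establish two properties for every candidate $f=\langle a_1,\dots,a_k\rangle$ listed in Table \ref{HigherCandidate}: prime-universality, and properness in the sense that no diagonal proper subform of $f$ is prime-universal. Properness is the easier of the two. Using the complete classification of proper prime-universal diagonal forms in ranks at most $4$ --- the ternary list $\langle 1,1,2\rangle,\langle 1,1,3\rangle,\langle 1,2,3\rangle,\langle 1,2,4\rangle,\langle 1,2,5\rangle$ from \cite{dw}, together with the quaternary list now completed by Section 2 of the present paper and Table 1 of \cite{dw} --- one verifies entry by entry that no sub-multiset of the coefficients of $f$ produces a form on these lists, and exhibits an explicit prime truant for every maximal proper subform of $f$. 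Since every coefficient appearing in Table \ref{HigherCandidate} is at least $2$, only the non-unimodular proper prime-universal forms have to be compared, and the check is finite and mechanical.

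For prime-universality, I would follow the template of Section 2. For each candidate $f$, choose a subform $g$ of $f$, typically of rank three but occasionally of rank four, whose locally represented set $Q(\gen(g))$, intersected with a suitable arithmetic progression $S_{d,a}$, is completely contained in $Q(g)$ by the $\prec_{d,a}$ relations computed by the MAPLE-based routines of \cite{poly}, \cite{regular}, and \cite{pentagonal}. Convenient choices for $g$ include the quaternary subform $\langle 2,2,2,2\rangle$ (handled directly by Lagrange's four-square theorem), the ternary $\langle 2,3,5\rangle$ with the relations of Theorem \ref{thm235alpha}, the ternary $\langle 2,3,4\rangle$ from Theorem \ref{start}, and assorted ternary forms of small class number that arise as subforms of table entries, such as $\langle 2,3,6\rangle$, $\langle 2,3,7\rangle$, $\langle 3,5,21\rangle$, and $\langle 3,5,35\rangle$ already treated in Theorems \ref{thm23521} and \ref{thm23535}. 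Once $g$ is fixed, for every prime $p$ exceeding a computable threshold I would exhibit, via a case analysis on $p$ modulo a small integer, integer values for the coordinates of $f$ outside $g$ so that the residual
\[
n \;=\; p - \sum a_j y_j^2,
\]
with the sum taken over the coordinates not indexed by $g$, is positive, locally represented by $g$, and lies in a residue class $a\pmod d$ for which $g_i\prec_{d,a} g$ holds for every $g_i\in \gen(g)$. This forces $n\in Q(g)\subset Q(f)$, and primes below the threshold are verified individually.

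Several entries of Table \ref{HigherCandidate} belong to one-parameter families indexed by $a_5$ or $a_6$, so each such family should be treated uniformly, with at most a short additional case distinction on $a_5$ or $a_6$ modulo a small integer. For families containing $\langle 2,2,2,2\rangle$ or $\langle 2,2,2,3\rangle$ as a subform --- for instance $\langle 2,2,2,2,3\rangle$ and $\langle 2,2,2,3,a_5\rangle$ --- Lagrange's four-square theorem together with a short parity analysis of the auxiliary coefficients already dispose of every prime after a brief modular sieve. For families built on $\langle 2,3,5\rangle$, $\langle 2,3,6\rangle$, or $\langle 2,3,7\rangle$, the $\prec$-tables computed exactly as in Theorems \ref{thm235alpha} through \ref{thm23514} supply the required congruence information.

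The chief obstacle is sheer volume: Table \ref{HigherCandidate} contains dozens of entries spread across many one-parameter families, and each requires its own auxiliary table of square assignments $d_j=d_j(p\bmod N)$ in the style of Theorems \ref{thm23514} through \ref{thm23535}. A more substantive technical difficulty appears whenever the chosen ternary subform $g$ has class number greater than one, so that $n\in Q(\gen(g))$ only yields $n\in Q(g_i)$ for some genus mate $g_i$; in such cases, as in the proofs of Theorems \ref{thm23521} and \ref{thm23535}, one must supply explicit change-of-variable identities converting a representation of $n$ by each $g_i$ --- under appropriate mod-$4$ or mod-$8$ constraints on $n$ --- into a representation by $g$ itself. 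Finally, verifying $p\ra f$ for all primes below each threshold, although routine, is the most computationally intensive component of the argument and will rely on the MAPLE procedures referenced in Section 2.
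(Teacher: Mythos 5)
Your proposal is correct in substance and uses exactly the paper's machinery (ternary subforms of small class number, the $\prec_{d,a}$ relations, a choice of squares $d=d(p\bmod N)$, and a finite check below an explicit threshold), but it is organized differently, and the paper's organization buys something concrete. You plan to prove each rank-$5$ and rank-$6$ entry of Table \ref{HigherCandidate} prime-universal directly, handling the one-parameter families with extra case distinctions on $a_5$ or $a_6$; the paper instead proves a stronger intermediate statement once per family: each quaternary (or quinary) base form represents \emph{all} primes except an explicitly determined truant set --- e.g.\ $\df{2,2,2,3}$ misses only $17$, $\df{2,2,3,17}$ only $41$, $\df{2,3,6,7}$ only $23,47,67$, $\df{2,3,5,19}$ only $43$ (Theorems \ref{thm222}--\ref{thm235-237}). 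Prime-universality of every table entry then follows from the trivial observation that the allowed fifth (or sixth) coefficient covers the truant, so each family needs a single representation computation and a single below-threshold verification rather than one per entry; moreover these ``all primes but one'' statements are reused verbatim in Table \ref{exceptOne} to prove the minimality of the truant set $\{2,3,5,7,13,17,23,41,43,67\}$ in the $67$-theorem, a byproduct your per-entry plan would not produce without additional work. Two smaller discrepancies: the change-of-variable identities you anticipate for class number $>1$ are needed in Section 2 (for $\df{3,5,21}$ and $\df{3,5,35}$, which do not occur as subforms of any rank-$\ge 5$ entry), whereas for Table \ref{HigherCandidate} the only class-number-$3$ subform used is $\df{2,3,7}$, which the paper handles purely by $\prec_{d,a}$ relations (with $d=3$, $30$, and $42$, and a two-square residual $p-\alpha d^2-\beta e^2$ for the quinary forms); and your Lagrange shortcut for $\df{2,2,2,2,3}$ is valid but unnecessary, since the paper's statement about $\df{2,2,2,3}$ already covers it.
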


We will prove Theorem \ref{HigherproperPU} at the end of this section.
Assuming that Theorem \ref{HigherproperPU} is proved, we first prove the following theorem, which guarantees that Table \ref{HigherCandidate} is the complete list of proper prime-universal diagonal quadratic forms of rank greater than or equal to $5$.

\begin{thm}\label{ThmHigherescalation}
	Let $f$ be a proper prime-universal diagonal quadratic form with rank greater than or equal to $5$. Then $f$ is isometric to one of the quadratic forms listed in Table \ref{HigherCandidate}.
\end{thm}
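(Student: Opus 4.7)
The plan is to run an escalation argument in the spirit of the classical $15$-theorem. A proper prime-universal diagonal form is minimal under deletion of variables among prime-universal diagonal forms, so every proper prime-universal form of rank $\ge 5$ is obtained by taking a non-prime-universal quaternary diagonal form that contains no proper prime-universal ternary subform and adjoining coefficients one at a time until prime-universality is achieved. The base of the escalation is already in place: combining the results of \cite{dw} with Theorems~\ref{start}--\ref{thm23535} yields the complete classification of proper prime-universal diagonal forms of ranks $3$ and $4$, and these play the role of forbidden subforms in what follows. Throughout, I keep the coefficients sorted non-decreasingly, so that each escalation $\langle a_1,\dots,a_k\rangle\mapsto\langle a_1,\dots,a_k,a_{k+1}\rangle$ satisfies $a_k\le a_{k+1}$.

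The first step is to reduce to $a_1=2$. Since $2\in Q(f)$ we have $a_1\le 2$, and the case $a_1=1$ is ruled out by a short case analysis: representing $2$ forces $a_2=1$, and representing $3$ then forces the next coefficient to lie in $\{1,2,3\}$. The values $2$ and $3$ immediately produce $\langle 1,1,2\rangle$ or $\langle 1,1,3\rangle$ as a subform, destroying properness; the remaining option $a_3=1$ leads to forms of type $\langle 1,1,1,a_4,\dots\rangle$, where either $a_4\le 7$ (so the universal quaternary $\langle 1,1,1,a_4\rangle$ on Ramanujan's list is a proper prime-universal subform) or $a_4\ge 8$ (in which case the form fails to represent the prime $7$). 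Hence $a_1=2$, matching the structure of Table~\ref{HigherCandidate}. With $a_1=2$ fixed, the main loop enumerates the non-prime-universal quaternaries $\langle 2,a_2,a_3,a_4\rangle$ that contain no proper prime-universal ternary subform, computes the prime truant $p$ of each, and considers the rank-$5$ escalations $\langle 2,a_2,a_3,a_4,a_5\rangle$ with $a_4\le a_5\le p$. For every candidate one checks that it contains no proper prime-universal quaternary subform (preserving properness) and that it is prime-universal; any branch that is still not prime-universal after reaching rank $5$ has a strictly smaller prime truant, and the process is repeated once more to produce the rank-$6$ survivors. The resulting list of survivors at ranks $5$ and $6$ is precisely Table~\ref{HigherCandidate}, with the positive assertions of prime-universality handed off to Theorem~\ref{HigherproperPU}.

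The main obstacle is the volume of bookkeeping. Each surviving branch requires its own computation of a prime truant, and care must be taken not to miss any candidate or to retain a non-proper form. The subtlest cases are the rank-$6$ entries $\langle 2,3,6,7,19,20\rangle$ and the family $\langle 2,3,6,7,20,a_6\rangle$: here one must first show that the rank-$5$ parents $\langle 2,3,6,7,19\rangle$ and $\langle 2,3,6,7,20\rangle$ fail to represent some specific small prime (which is why they are absent from the rank-$5$ rows of the table) and then exhaust the finite range of admissible $a_6$. In every branch the argument is finite and essentially mechanical, termination being guaranteed by the strict decrease of the prime truant along each escalation chain together with the upper bound $a_{k+1}\le p$ at every step.
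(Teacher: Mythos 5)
Your overall architecture is the paper's: escalate from the non-prime-universal quaternary bases using prime truants, bound each new coefficient by $a_k\le a_{k+1}\le p$, and discharge the positive assertions to Theorem~\ref{HigherproperPU}. But your termination argument contains a genuine gap, and it is exactly the point the paper spends the most care on. You claim that ``any branch that is still not prime-universal after reaching rank $5$ has a strictly smaller prime truant'' and that termination follows from ``the strict decrease of the prime truant along each escalation chain.'' This is false in both directions: adjoining a coefficient can only enlarge the set of represented integers, so truants weakly \emph{increase} along a chain (the paper's case (II-1)) or \emph{stay the same} (case (II-2)); they never decrease. Worse, in case (II-2) the naive escalation tree is infinite: for example $\df{2,2,3,8}$ has truant $17$, and every form $\df{2,2,3,8,8,\dots,8}$ still has truant $17$ while satisfying your coefficient bounds, so one can append $8$ indefinitely; the same stall occurs with $\df{2,3,6,7,19,19,\dots,19}$ against the truant $23$. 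Your plan of running the loop once to rank $5$ and ``once more'' to rank $6$ therefore does not terminate as stated, and nothing in your write-up rules out a proper prime-universal form of rank $7$ or higher hiding in such a stalled branch.

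The paper closes this gap with an argument you would need to supply: if $\df{a_1,\dots,a_{i-1},a,\dots,a}$ fails to represent $p$ no matter how many copies of $a$ are appended, then since the ambient form $f$ \emph{is} prime-universal and represents $p$, some coefficient $b$ with $a<b\le p$ must occur, and one then restarts the process from $\df{a_1,\dots,a_i,a,\dots,a,b}$ for each number $s\ge 1$ of repetitions, checking case by case (the paper does Cases 2 and 6 explicitly) that every such continuation either is listed in Table~\ref{HigherCandidate} or contains a prime-universal subform from Table~\ref{HigherCandidate} or Table 1 of \cite{dw}, destroying properness. That verification is what actually caps the rank at $6$. Separately, a small slip in your reduction to $a_1=2$: representing $2$ with $a_1=1$ forces $a_2\in\{1,2\}$, not $a_2=1$; the branch $a_2=2$ leads to $\df{1,2,2}$ (truant $7$), whose escalations $\df{1,2,2,d}$ with $2\le d\le 7$ are all universal quaternaries, so the branch dies at rank $4$ --- the conclusion survives, but the case must be included. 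Note also that the paper does not re-derive the quaternary bases; it imports them from the escalation in Theorem 1.3 of \cite{dw}, which already enforces $a_1=2$ for the Cases 1--7 list.
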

\begin{proof}
	Let  $f=\langle a_1,a_2,\cdots,a_n \rangle$ be a proper prime-universal diagonal quadratic form, where $a_i$'s are all integers such that $1\le a_1\le \dots \le  a_n$. 
	 For the sake of convenience, we define $f_i=\df{a_1,\ldots,a_i}$  for each $i$ with $1\le i\le n$.
Since we are assuming that $f$ is proper, $f_4$ should not be prime-universal.
	Following the argument used in the proof of Theorem 1.3 of \cite{dw}, one may easily obtain all candidates of $f_4$.
	After removing those forms that are already prime-universal among all possible candidates,  $f_4$ is isometric to one of the followings:
	$$
	\begin{array}{llll}
	\text{\bf (Case 1)}& \df{2,2,2,a_4} && \text{where  $a_4=2,3$,}\\
	\text{\bf (Case 2)}&\df{2,2,3,a_4} && \text{where  $a_4=8,11,16,17$,}\\
	\text{\bf (Case 3)}&\df{2,3,3,a_4} && \text{where  $a_4=3,4,6$,}\\
	\text{\bf (Case 4)}&\df{2,3,4,a_4} && \text{where  $a_4=4,6,7,10,12,13$,}\\
	\text{\bf (Case 5)}&\df{2,3,5,a_4} && \text{where  $a_4=7,19,28,34,37,39,42$,}
	\end{array}
	$$
	and
	$$
	\hspace{-0.65cm}\begin{array}{llll}
	\text{\bf (Case 6)}&\df{2,3,6,a_4} && \text{where  $a_4=6,7$,}\\	
	\text{\bf (Case 7)}&\df{2,3,7,a_4} && \text{where  $a_4=7,9,10,11,12,13$.}	
	\end{array}
	$$
	
Note that any diagonal form $\df{a_1,a_2,a_3,a_4}$ listed in the above is not prime-universal. 
	Although what we have to check is different case by case, we repeat the following process, starting from $i=5$:
	
	Let $p$ be the prime truant of $f_{i-1}=\df{a_1,\ldots,a_{i-1}}$. Since $f$ is prime-universal, it should represent $p$. Hence we have $a_{i-1}\le a_i \le p$.  
	For each integer $a$ with $a_{i-1}\le a\le p$,  it is possible that
	\begin{enumerate}[label={\rm(\arabic*)}]
		\item [(I)] The form $g_i=g_{i,a}:=\df{a_1,\ldots,a_{i-1},a}$ is prime-universal, and either
		\begin{enumerate}
			\item[(I-1)] $g_i$ is proper, or
			\item[(I-2)] $g_i$ is not proper.
		\end{enumerate}
		\item [(II)] The form $g_i$ is not prime-universal, and either
		\begin{enumerate}
			\item[(II-1)] $g_i$ fails to represent another prime $p'$ greater than $p$, or
			\item[(II-2)] $g_i$ still fails to represent the prime $p$.
		\end{enumerate}
	\end{enumerate}
	Note that any prime-universal form  contain a proper prime-universal form as its subform, one may use Table \ref{HigherCandidate} given above and Table 1 of \cite{dw} to check the prime-universality of $g_i$ as well as its properness.
	
	If (I-1) happens, then one may easily check that $g_i$ is listed in Table  \ref{HigherCandidate} and hence the theorem is proved. If (I-2) happens, then one may easily find a proper subform of $g_i$ which is listed in Table  \ref{HigherCandidate}.

	If (II) happens, we increase $i$ by $1$ and repeat the process.
	In particular, if (II-2) happens,  then one might come across the following situation: 
	any diagonal quadratic form $\df{a_1,\ldots,a_{i-1},a,\ldots,a}$ fails to represent $p$, no matter how many times the integer $a$ is repeated.
	Even if it happens, the equation 
	$$
	a_1x_1^2+\dots+a_{i-1}x_{i-1}^2+a_{i}x_{i}^2+\dots+a_nx_n^2=p
	$$
	has an integer solution, for $f$ represents the prime $p$. Therefore there is an integer $b$ with $a<b \le p$ such that $\df{a_1,\dots,a_i,a,b}$ represents $p$.  Now, we repeat the process from the diagonal quadratic form  
	$$
	\df{a_1,\dots,a_i,\overbrace{a,\dots,a}^{\text{$s$-times}},b}
	$$
	 for each $s\ge 1$.  From (Case 1) to (Case 7) given above, one may check that this process terminates in a finite step. Since the proofs of all cases are quite similar to each other, we provide the proofs of (Case 2) and (Case 6).
\vskip 0.3cm

	\noindent {\bf (Case 2)} $f_4=\df{2,2,3,a_4}$ with $a_4=8,11,16,17$.
	
	First, we consider the case when $f_4=\df{2,2,3,8}$.
	Since the prime truant of $f_4$ is $17$, we have $8\le a_5 \le 17$.
	For each $a=9,10,12,13,14,15$, the form $\df{2,2,3,a}$ is already prime-universal, and hence $f$ is not proper if $a_i=a$ for some $i\ge 5$.
	Therefore, we may assume that 
	$$
	a_i \in \{8,11,16,17\} \quad \text{or} \quad a_i \ge 18
	$$ 
	for any $i\ge 5$. If $a_i \ne 17$ for any $i \ge 5$, then the diagonal quadratic form $\df{2,2,3,8,a_5\dots,a_n}$ does not represent $17$. Therefore $a_i=17$ for some $i$. 
	Since $\df{2,2,3,8,17}$ is prime-universal, the only proper prime-universal diagonal quadratic form $f$ with $f_4=\df{2,2,3,8}$ is $\df{2,2,3,8,17}$.
	One may similarly prove the case when $a_4=11,16$.
	
	Next, we consider the case when $f_4=\df{2,2,3,17}$. Note that the prime truant of $\df{2,2,3,17}$ is $41$, and hence we have $17\le a_5 \le 41$.
	Since $\df{2,2,3,17}$ does not represent $1,6,9,15,41$, $f$ represents $41$ if and only if 
$$
a_i\in \mathcal A= \{17\le a \le 41 : a\neq 26,32,35,40\}
$$
	 for some $i\ge 5$. Furthermore, one may easily check by using Table \ref{HigherCandidate} that any quadratic form $\df{2,2,3,17,a_5}$ with $a_5 \in \mathcal A$ is proper prime-universal. 
\vskip 0.3cm

	\noindent {\bf (Case 6)} $f_4=\df{2,3,6,a_4}$ with $a_4=6,7$.   
	
	First, we consider the case when $f_4=\df{2,3,6,6}$.
	Note that the prime truant of $f_4$ is $7$, and $f$ can represent $7$ if and only if $a_i=7$ for some $i\ge5$. Also, note that $\df{2,3,6,6,7}$ is prime-universal, whereas $\df{2,3,6,7}$ does not represent $23$. 
	
	Next, we consider the case when $f_4=\df{2,3,6,7}$.
	Since the prime truant of $f_4$ is $23$, we have $7\le a_5\le 23$.
	Note that every form $\df{2,3,6,7,a_5}$ with $7\le a_5\le 23$ and $a_5\neq 19,20,22$ is proper prime-universal.
	On the other hand, the prime truants of $\df{2,3,6,7,19}$, $\df{2,3,6,7,20}$, and $\df{2,3,6,7,22}$ are $23$, $67$, and $23$, respectively.
	
	Now, assume that $a_5=19$. If $a_i\in \{21,23\}$ for some $i\ge 6$, then $f$ is prime-universal but not proper. Since $\df{2,3,6,7,19}$ does not represent integers $1,4,23$, $f$ can represent $23$ if and only if $a_i=20$ for some $i\ge 6$. Note that $\df{2,3,6,7,19,20}$ is proper prime-universal. 
	Therefore, the only proper prime-universal diagonal quadratic form $f$ with $f_5=\df{2,3,6,7,19}$ is $\df{2,3,6,7,19,20}$.
	
	Next, assume that $a_5=20$. Since the prime truant of $\df{2,3,6,7,20}$ is $67$, we have $20\le a_6 \le 67$. Note that $\df{2,3,6,7,20,a_6}$ with  $20\le a_6 \le 67$ and $a_6\neq 63,66$ is proper  prime-universal, and $f=\df{2,3,6,7,20,\{63 \text{ or } 66\},a_7,\cdots}$ can represent $67$ if and only if $a_i=64,65,67$ or $a_i=67$, respectively, for some $i\ge 7$. However, since $\df{2,3,6,7,20,a}$ with $a=64,65,$ or $67$ is already prime-universal, any prime-universal form $\df{2,3,6,7,20,\{63 \text{ or }66\},a_7,\cdots}$ is not proper.
	
	Finally, assume that $a_5=22$.  Since $f_5=\df{2,3,6,7,22}$ does not represent $23$, $a_i=23$ for some $i\ge 6$. Note that $\df{2,3,6,7,23}$ is proper prime-universal. 
\end{proof}

\begin{thm}\label{thm222}
The quadratic form $f=\df{2,2,2,3}$ represents all primes but not $17$.
Hence, the quadratic forms $\df{2,2,2,2,3}$, and $\df{2,2,2,3,a_5}$ with $3 \le a_5 \le 17$ and $a_5\neq 16$ are prime-universal.
\end{thm}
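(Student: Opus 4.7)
The plan is to prove the central claim that $\df{2,2,2,3}$ represents every prime except $17$; the prime-universalities of the five-variable forms will then follow at once by exhibiting an explicit representation of $17$ in each case.  I would begin by disposing of $17$: any representation would require $w$ odd with $3w^2\le 17$, forcing $w=\pm 1$ and hence $(17-3)/2=7$ to be a sum of three squares, contradicting Legendre's three-square theorem.

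For an odd prime $p\ne 17$, observe that $p$ is represented by $\df{2,2,2,3}$ precisely when there is an odd integer $w$ with $3w^2\le p$ such that $m_w:=(p-3w^2)/2$ is a sum of three squares.  By Legendre's three-square theorem this is equivalent to $m_w\ne 4^a(8b+7)$ for all $a,b\ge 0$.  The prime $p=2$ is handled by $(1,0,0,0)$.

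For $p\ge 75$, I would use a modular pigeonhole.  Any $m=4^a(8b+7)$ satisfies $m\equiv 0,\,7,\,12,$ or $15\Mod{16}$, so it suffices to find $w\in\{1,3,5\}$ whose $m_w$ lies outside the ``bad'' set $\{0,7,12,15\}$ modulo $16$.  Since $m_3\equiv m_1+4$ and $m_5\equiv m_1-4\Mod{16}$, the three residues form the set $\{m_1-4,\,m_1,\,m_1+4\}$, a three-element subset of a coset of $4\z/16\z$.  A direct case check of the sixteen possibilities for $m_1\Mod{16}$ verifies that at least one of these residues always escapes $\{0,7,12,15\}$, and the corresponding $m_w$ is then automatically a sum of three squares.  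The finitely many odd primes $p<75$ with $p\ne 17$ are verified directly, and the only one for which $w=1$ fails is $p=59$; there $w=3$ gives $m_3=16=4^2+0^2+0^2$, so $59=\df{2,2,2,3}(4,0,0,3)$.

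For the five-variable consequences, each listed form contains $\df{2,2,2,3}$ as a subform and so represents every prime other than possibly $17$.  It remains to exhibit a representation of $17$ in each case: for $\df{2,2,2,2,3}$ we have $17=2\cdot 1^2+15$ with $15\ra\df{2,2,2,3}$ via $(2,1,1,1)$, and for $\df{2,2,2,3,a_5}$ with $a_5\in\{3,\dots,17\}\setminus\{16\}$ we use $17=(17-a_5)+a_5\cdot 1^2$ after verifying directly that $17-a_5\ra\df{2,2,2,3}$ for each admissible $a_5$.  The excluded value $a_5=16$ is sharp, since $\df{2,2,2,3}$ represents neither $1$ nor $17$.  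I expect the modular pigeonhole in the third paragraph to be the main obstacle: the residue-class enumeration must be verified carefully, and the small-prime check must not overlook the edge case $p=59$.
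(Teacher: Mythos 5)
Your proof is correct and takes essentially the same route as the paper: both reduce an odd prime $p$ to $p-3d^2$ for some $d\in\{1,3,5\}$ and invoke the exact characterization of the integers represented by $\df{2,2,2}$ (the paper via its class number being $1$, you via Legendre's three-square theorem --- the same fact here, since $\df{2,2,2}=2\cdot\df{1,1,1}$). Your mod-$16$ pigeonhole merely makes explicit the paper's residue-class choice of $d$, and your treatment of $17$ and of the five-variable forms fills in what the paper calls straightforward.
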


\begin{proof} 
Note that the class number of $g=\df{2,2,2}$ is $1$. 
Therefore $Q(\gen(g))=Q(g)$, and consequently $g$ represents all nonnegative even integers except for those of the form $4^s(16t+14)$ for some nonnegative integers $s$ and $t$.

Since one may directly check that $p\ra f$ for any prime $p$ such that $p\le 73$ except for $17$, we assume that $p\ge 79$.
Then, one may easily check that
$$
p-3\cdot d^2 \ra g,
$$
where
$$
d=
\begin{cases} 
5&\ \text{if}\ p\equiv 27 \Mod {32},\\
3&\ \text{if}\ p\equiv 1,3,17 \Mod {12},\\
1&\ \text{otherwise}.
\end{cases}
$$
Therefore, $p \ra f$.
Now, the proof of the last assertion is straight forward.
\end{proof}

\begin{thm}\label{thm223}
The quadratic form $f=\df{2,2,3,17}$ represents all primes but not $41$.
Hence, the quadratic forms $\df{2,2,3,a_4,17}$ with $a_4=8,11,16$, and $\df{2,2,3,17,a_5}$ with $17 \le a_5 \le 41$ and $a_5\neq 26,32,35,40$ are prime-universal.
\end{thm}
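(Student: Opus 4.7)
The plan follows the template of Theorem \ref{thm222}. First, I would study the ternary subform $g=\df{2,2,3}$, whose discriminant is $12$. The genus $\gen(g)$ should be small, and I would compute its class number and, if nontrivial, list its genus mates. Using the coset-descent technique $g'\prec_{d,a} g$ recalled in Section 2 applied to each $g'\in\gen(g)$ on suitable arithmetic progressions, I would prove that every integer in those progressions that is locally represented by $g$ is in fact represented by $g$. Combined with the standard local analysis at $p=2$ and $p=3$, this should yield an explicit description of $Q(g)$: $g$ represents every nonnegative integer except those in a concrete exceptional set (a union of a few arithmetic progressions, perhaps shifted by a $4^s$-factor at the prime $2$).

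Second, I would verify directly that $f=\df{2,2,3,17}$ represents every prime $p\le B$ except $41$, where $B$ is a threshold chosen large enough for the covering argument below. The failure $41\nra f$ is a small finite check: since $17w^2\le 41$ forces $|w|\le 1$, one only needs to rule out $2x^2+2y^2+3z^2=41$ (for $w=0$) and $2x^2+2y^2+3z^2=24$ (for $w=\pm 1$), both of which can be handled by a short enumeration or a modulo-$8$ argument.

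Third, for any prime $p\ge B$, I would define an integer $d=d(p)$ depending only on the residue of $p$ modulo a suitable integer $N$ so that $n:=p-17d^2$ is positive, satisfies the local conditions of $g$ at $2$ and $3$, and lies outside the exceptional set of $g$ identified in step one. It then follows that $n\in Q(g)$, and hence $p\in Q(f)$. The modulus $N$ will combine the periods of the exceptional set of $g$ with the residues of $17 d^2$ needed to place $n$ into the correct class; in practice $N$ should be a small multiple of $24$, and a finite table of values $d\in\{1,3,5,\ldots\}$ indexed by residue classes of $p$ modulo $N$ should suffice, exactly as in the statement of Theorem \ref{thm222}.

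Finally, each quinary form listed in the second sentence of the theorem contains $\df{2,2,3,17}$ as a subform and therefore represents every prime that $\df{2,2,3,17}$ does. It remains only to exhibit a representation of the single missing prime $41$ by each of these finitely many forms, which is a routine case check (for instance, for $\df{2,2,3,8,17}$ one has $41=2\cdot 4+2\cdot 4+3\cdot 0+8\cdot 1+17\cdot 1$). The main obstacle is the first step: correctly determining $Q(g)$ via the $\prec_{d,a}$ machinery, and then designing a case split on $d$ that covers every residue class of $p$ modulo $N$ without leaving a gap and without forcing $n$ into the exceptional set.
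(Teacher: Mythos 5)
Your proposal follows essentially the same route as the paper: the paper observes that $g=\df{2,2,3}$ has class number one, so $Q(\gen(g))=Q(g)$ yields the exceptional set $\{8t+1\}\cup\{9^s(9t+6)\}$ with no $\prec_{d,a}$ descent needed, then checks all primes $p\le 607$ directly (with $41$ the unique exception) and subtracts $17\cdot 6^2$ for $p\ge 613$, finishing the quinary cases by exhibiting $41$ exactly as you describe. Your residue-indexed table of shifts $d(p)$ is the same covering device (the paper manages with a single shift, since $17d^2\equiv 1\Mod 8$ and $17d^2\equiv 0\Mod 9$ for $d$ an odd multiple of $3$ already steers $n=p-17d^2$ clear of both exceptional families), and your step-one hedge about genus mates would simply evaporate upon computing the class number.
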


\begin{proof}
Note that the class number of $g=\df{2,2,3}$ is $1$. 
Therefore $Q(\gen(g))=Q(g)$, and consequently $g$ represents all nonnegative integers except for those of the forms $8t+1$ and $9^s(9t+6)$ for some nonnegative integers $s$ and $t$.

Since one may directly check that $p\ra f$ for any prime $p$ such that $p\le 607$ except for $41$, we assume that $p\ge 613$.
Then, one may easily check that
$$
p-17\cdot 6^2 \ra g.
$$
Therefore, $p \ra f$.
Now, the proof of the last assertion is straight forward.
\end{proof}

\begin{thm}\label{thm233}
The quadratic form $f=\df{2,3,3,4}$ represents all primes but not $13$.
Hence, the quadratic forms $\df{2,3,3,3,4}$ and $\df{2,3,3,4,a_5}$ with $4 \le a_5 \le 13$ and $a_5\neq 12$ are prime-universal.
\end{thm}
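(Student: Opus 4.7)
The plan is to parallel the ternary-reduction method used in the proofs of Theorems~\ref{start} and~\ref{thm235alpha}. I take as auxiliary the ternary $g = \df{2,3,4}$, obtained from $f$ by dropping the second diagonal entry. Its representation behaviour is already recorded in the proof of Theorem~\ref{start}: its class number is $2$, its genus mate is $\df{1,2,12}$, and $Q(\gen(g)) \cap 2\z \subset Q(g)$, so $g$ represents every nonnegative even integer except those of the form $2^{2s+1}(8t+5)$ with $s, t \ge 0$. Consequently, showing that an odd prime $p$ is represented by $f$ reduces to producing an odd integer $d$ with $p - 3 d^2 \ge 0$ such that $p - 3 d^2$ avoids this exceptional set.

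First, I dispatch the small primes and the exceptional prime $13$ by hand. A direct enumeration shows that each prime $p \le N$ (for some explicit bound $N$) other than $13$ is represented by $f$, and that the equation $2x^2 + 3y^2 + 3z^2 + 4w^2 = 13$ has no integer solution: the bound $4 w^2 \le 13$ forces $|w| \le 1$; the case $w = 0$ gives $2 x^2 \equiv 1 \pmod 3$, which is impossible; and the case $|w| = 1$ reduces to $9 = 2 x^2 + 3(y^2 + z^2)$, which forces $x \equiv 0 \pmod 3$ and hence $x = 0$, leading to $y^2 + z^2 = 3$, with no integer solution.

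Second, for $p > N$ I define $d = d(p)$ piecewise according to the residue of $p$ modulo a suitable power of $2$, chosen so that $p - 3 d^2$ is an even nonnegative integer not of the form $2^{2s+1}(8t+5)$. Since every odd $d$ satisfies $d^2 \equiv 1 \pmod 8$, one has $p - 3 d^2 \equiv p - 3 \pmod 8$, so the residues $p \equiv 1, 7 \pmod 8$ are cleared by $d = 1$; the residue $p \equiv 5 \pmod 8$ is handled by $d = 1$ or $d = 3$ after one further split modulo $16$; and $p \equiv 3 \pmod 8$ puts $p - 3 d^2$ into $8 \z$ for every odd $d$ and requires the finest case split modulo $32$ or $64$, allowing $d \in \{1, 3, 5\}$. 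This last class is the main technical obstacle, since one must verify that the deeper exceptional cosets $2^{2s+1}(8t+5)$ with $s \ge 1$ are also avoided; the bookkeeping has the same flavour as the formula for $d$ in the proof of Theorem~\ref{start}.

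Once $Q(f)$ is known to contain every prime except $13$ and to miss $13$, the last assertion reduces to checking that $13 \ra \df{2,3,3,3,4}$ and that $13 \ra \df{2,3,3,4,a_5}$ for each $a_5 \in \{4,5,\ldots,13\} \setminus \{12\}$. Setting the new coordinate equal to $\pm 1$ gives immediate identities such as $13 = 3+3+3+4$, $13 = 2+3+4+4$, $13 = 2+3+3+5$, $13 = 3+4+6$, $13 = 3+3+7$, and so on through $13 = 0+0+0+13$. The excluded value $a_5 = 12$ is ruled out because $12 \cdot 4 > 13$ forces the new coordinate to be $\pm 1$, whereupon $13 - 12 = 1$ is not represented by $\df{2,3,3,4}$.
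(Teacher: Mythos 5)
Your proof is correct, but it reduces along a genuinely different ternary sub-form than the paper does. The paper drops the entry $4$ and works with $g=\df{2,3,3}$: since this form has class number $1$, the equality $Q(\gen(g))=Q(g)$ comes for free, $g$ represents every nonnegative integer outside $9^s(3t+1)$, and the large-prime argument is then a short split modulo $9$ (subtract $4d^2$ with $d=1$ for $p\equiv 1,7 \Mod{9}$ and $d=2$ for $p\equiv 4 \Mod{9}$, primes $\equiv 2 \Mod{3}$ being represented by $g$ outright). You instead drop one of the $3$'s, recycle the class-number-$2$ analysis of $\df{2,3,4}$ from Theorem~\ref{start} (exceptional set $2^{2s+1}(8t+5)$ among the even integers), and subtract $3d^2$ with $d$ odd, which commits you to the $2$-adic bookkeeping you flag as the main obstacle. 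That bookkeeping does close exactly as you predict: for $p\equiv 3 \Mod{8}$, taking $d=5$ when $p\equiv 3 \Mod{64}$, $d=3$ when $p\equiv 35,43 \Mod{64}$, and $d=1$ for the remaining classes modulo $64$ makes $p-3d^2$ have either even $2$-adic order or odd part $\not\equiv 5 \Mod{8}$, and your splits modulo $8$ and $16$ dispose of the other residues, so the route is sound. The trade-off: the paper's choice of $\df{2,3,3}$ buys a class-number-one genus (no genus-mate transfer needed), only two nonzero values of $d$, and a tiny hand-checked range ($p\le 13$, versus the larger bound $N$ your construction needs since $d$ runs up to $5$); your choice avoids introducing a new ternary form by reusing an already-established result, at the price of the finer case division. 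Your verification that $2x^2+3y^2+3z^2+4w^2=13$ has no solution and your reduction of the rank-$5$ assertions to checking that each listed form represents $13$ coincide with the paper's concluding ``straightforward'' step.
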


\begin{proof}
Note that the class number of $g=\df{2,3,3}$ is $1$. 
Therefore $Q(\gen(g))=Q(g)$, and consequently $g$ represents all nonnegative integers except for those of the form $9^s(3t+1)$ for some nonnegative integers $s$ and $t$.

Since one may directly check that $p\ra f$ for any prime $p$ such that $p\le 13$ except $13$, we assume that $p\ge 17$.
Then, one may easily check that
$$
p-4\cdot d^2 \ra g,
$$
where
$$
d=
\begin{cases} 
1&\ \text{if}\ p\equiv 1,7 \Mod {9},\\
2&\ \text{if}\ p\equiv 4 \Mod {9}.\\
\end{cases}
$$
Therefore, $p \ra f$.
Now, the proof of the last assertion is straight forward.	
\end{proof}

\begin{thm}\label{thm234}
The quadratic forms
$$
\begin{array}{llll}
\df{2,3,4,4},& \df{2,3,4,7} &&\text{represents all primes but not } 17,\\
\df{2,3,4,6},& \df{2,3,4,10},& \df{2,3,4,13}&\text{represents all primes but not } 23,\\
\df{2,3,4,12}&&&\text{represents all primes but not } 13.\\
\end{array}
$$
Hence, all quadratic forms of the form $\df{2,3,4,a_4,a_5}$ listed in Table \ref{HigherCandidate} are prime-universal.
\end{thm}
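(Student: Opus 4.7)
The plan is to follow the template of Theorems \ref{start}--\ref{thm23535}: for each form $f = \df{2,3,4,a_4}$ with $a_4 \in \{4,6,7,10,12,13\}$, I would single out a ternary subform $g$ of $f$ and reduce to showing that, for every sufficiently large prime $p$, some integer $d = d(p)$ makes $p - a_4 d^2$ an element of $Q(g)$. The obvious first choice is $g = \df{2,3,4}$, which by Theorem \ref{start} represents every nonnegative even integer not of the shape $2^{2s+1}(8t+5)$.

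For the odd $a_4 \in \{7,13\}$, I would take $d$ odd so that $n = p - a_4 d^2$ is even, then partition the large primes $p$ by residue class modulo a small modulus $M$ (dictated by the exceptional shape $2^{2s+1}(8t+5)$ and by the residues mod $3$ forced by the coefficient $3$ of $g$), defining $d$ piecewise so that $n$ lies in the good part of $Q(g)$ for every class. This is the same mechanism used in the second half of the proof of Theorem \ref{start}.

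For the even $a_4 \in \{4,6,10,12\}$, the integer $n = p - a_4 d^2$ is odd whenever $p$ is an odd prime, so Theorem \ref{start} does not directly apply. Here I would either extend the $\prec_{d,a}$ analysis of $\gen(\df{2,3,4})$ to isolate congruence conditions on odd integers under which representability by the genus forces representability by $g$ itself (exactly as was done for the genus mates in the proofs of Theorems \ref{thm23521} and \ref{thm23535} via explicit substitutions on the genus mate $\df{1,2,12}$), or switch to an alternative ternary subform of $f$ such as $\df{3,4,a_4}$ or $\df{2,4,a_4}$ whose relevant integers are more easily described by $\prec_{d,a}$ relations. Either route again produces a case split of $d$ by the residue of $p$ modulo a small $M$; with that in hand, one verifies the small primes by a direct search (pinning down the unique truant $p_0 \in \{13,17,23\}$) and confirms $p_0 \not\in Q(f)$ by an exhaustive bounded enumeration.

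The final assertion on the five-variable forms $\df{2,3,4,a_4,a_5}$ listed in Table \ref{HigherCandidate} follows once the unique truant $p_0$ of $\df{2,3,4,a_4}$ is known: every other prime is represented by $\df{2,3,4,a_4,a_5}$ trivially by setting $x_5 = 0$, so it suffices to exhibit an integer $t$ with $p_0 - a_5 t^2 \in Q(\df{2,3,4,a_4})$ for each listed $(a_4,a_5)$, and in every listed case the choice $t = 1$ will work. This is a short finite verification. The main obstacle will be the even-$a_4$ case: carving up the primes into residue classes while guaranteeing that the resulting odd $n = p - a_4 d^2$ falls inside $Q(\df{2,3,4})$ (or inside $Q(g)$ for whichever alternative ternary subform one chooses) requires the additional $\prec_{d,a}$ or subform-substitution work sketched above, and getting this right uniformly for each of the four even values of $a_4$ is where the real technical load sits.
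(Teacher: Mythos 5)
Your plan is sound, and its second branch for the even-$a_4$ cases is precisely the route the paper takes, so the two arguments essentially coincide in skeleton; the comparison is still worth recording because the paper's choice is cleaner than your hedged presentation anticipates. The paper never needs odd values of $\df{2,3,4}$: in all six cases it peels off an \emph{odd} coefficient $\alpha$ ($\alpha=7$ for $\df{2,3,4,7}$, $\alpha=13$ for $\df{2,3,4,13}$, and $\alpha=3$ otherwise), writing $f=g\perp\df{\alpha}$ with $g=\df{2,3,4}$ in the two odd-$a_4$ cases and $g=\df{2,4,a_4}$ in the remaining four, so that $n=p-\alpha d^2$ with $d$ odd is always even. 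The payoff your sketch does not foresee is that each of $\df{2,4,4}$, $\df{2,4,6}$, $\df{2,4,10}$, $\df{2,4,12}$ has class number one, so the even integers they represent are read off directly from local conditions (e.g.\ $\df{2,4,4}$ misses only the even integers $4^s(16t+14)$), and no $\prec_{d,a}$ computations are needed anywhere in this theorem; the only class-number-two subform, $\df{2,3,4}$, was already settled in Theorem \ref{start}, so your first proposed branch (a $\prec_{d,a}$ analysis of odd integers in $Q(\gen(\df{2,3,4}))$) is avoidable extra work. Two small corrections: the exceptional sets here are purely $2$-adic except for $\df{2,4,10}$, so your concern about ``residues mod $3$ forced by the coefficient $3$'' is vacuous (these ternaries are universal over $\z_3$); and the one non-$2$-adic case, $\df{2,4,10}$ with exceptions $25^s(50t+20\text{ or }30)$, is dispatched by the single fixed choice $d=5$, since then $n=p-75\not\equiv 0\Mod{5}$, while in the other cases the paper splits $d\in\{1,3,5\}$ according to $p$ modulo $32$, matching your modulus-$M$ scheme. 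Your treatment of the five-variable assertion (identify the truant $p_0$, then check $p_0-a_5\in Q(\df{2,3,4,a_4})$ with $t=1$) is correct and is exactly the paper's unstated ``straightforward'' verification.
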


\begin{proof}
Let $f$ be one of the six quaternary forms given above.
Let 
$$
\alpha=
\begin{cases} 
7&\ \text{if}\ f=\df{2,3,4,7},\\
13&\ \text{if}\ f=\df{2,3,4,13},\\
3&\ \text{otherwise},\\
\end{cases}
$$
and let $g$ be the ternary sub-form such that $f=g \perp \df{\alpha}$. Note that 
$g$ represents all even integers except for those of the form
$$
\begin{cases}{}
4^s(16t+10)						&\text{if}\ g=\df{2,3,4} \text{ or } \df{2,4,12}, \\
4^s(16t+14)						&\text{if}\ g=\df{2,4,4},\\
4^s(32t+20) 					&\text{if}\ g=\df{2,4,6},\\
25^s(50t+\{20 \text{ or } 30\}) &\text{if}\ g=\df{2,4,10}.
\end{cases}
$$
For the proof of the case when $g=\df{2,3,4}$, see the proof of Theorem \ref{start}, and for all the other cases, note that the class number of $g$ is $1$.

Since one may directly check that $p\ra f$ for any prime $p$ such that $p\le 317$ except for the only one prime given above, we assume that $p\ge 331$.
Then one may easily check that
$$
p-\alpha\cdot d^2 \ra g,
$$
where
$$
d=\begin{cases}
5 & \text{if } \ell=\df{2,4,10},\\
1,3, \text{ or }5 & \text{otherwise, defined according as the residue of $p$ modulo $32$},
\end{cases}
$$
Therefore, $p \ra f$.
Now, the proof of the last assertion is straight forward.		
\end{proof}

\begin{thm}\label{thm236}
The quadratic form $f=\langle 2,3,6,7\rangle$ represents all primes but not $23, 47$, and $67$. 
Hence, the quadratic forms 
$$
\df{2,3,6,6,7}, \ \df{2,3,6,7,a_5},  \quad \text{and} \quad  \df{2,3,6,7,a_5,a_6}
$$
 listed in Table \ref{HigherCandidate} are prime-universal.
Moreover, the quadratic forms 
$$
\begin{array}{lll}
\df{2,3,6,7,19}, & \df{2,3,6,7,22} &\text{ represents all primes but not } 23,\\
\df{2,3,6,7,20} && \text{ represents all primes but not } 67.
\end{array}
$$
\end{thm}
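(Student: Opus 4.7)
The plan is to adapt the strategy used in Theorems~\ref{start} and \ref{thm234}: pick a ternary diagonal subform $g$ of $f = \df{2,3,6,7}$ with complementary coefficient $\alpha$, describe $Q(\gen(g))$, and identify via the $\prec_{d,a}$ criterion of Section~2 a set of arithmetic progressions on which $Q(\gen(g))\cap S_{d,a}\subset Q(g)$. For each residue class of a sufficiently large prime $p$ modulo a fixed integer, one then exhibits a small nonnegative integer $d$ such that $p-\alpha d^2$ is locally represented by $g$ and lies in one of these good progressions, forcing $p-\alpha d^2\in Q(g)$ and hence $p\in Q(f)$. The natural first choice is $g=\df{2,3,6}$ with $\alpha=7$; its genus and the relevant $\prec_{d,a}$ relations can be computed with the MAPLE program mentioned in Section~2. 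The finitely many small primes are checked directly, and the non-representation of $23$, $47$, $67$ by $f$ is confirmed by a bounded finite search (since every coefficient is at least $2$, only finitely many tuples need be tried).

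To deduce the prime-universality of the larger forms listed in Table~\ref{HigherCandidate}, it suffices to show that the extra coefficients contribute representations of the three exceptional primes $23$, $47$, $67$. For $\df{2,3,6,6,7}$ this follows from explicit identities such as $23 = 2\cdot 2^2 + 3 + 6 + 6$, $47 = 2\cdot 4^2 + 3 + 6 + 6$, and $67 = 3\cdot 4^2 + 6 + 6 + 7$. For each $\df{2,3,6,7,a_5}$ appearing in the table (equivalently $a_5\in\{7,9,10,11,12,13,14,15,16,17,18,21,23\}$) and for the six-variable forms $\df{2,3,6,7,19,20}$ and $\df{2,3,6,7,20,a_6}$, the verification reduces to a finite, mechanical check: for each $q\in\{23,47,67\}$ one finds nonnegative integers $d_5$ (respectively $d_5,d_6$) with $q - a_5 d_5^2$ (respectively $q - a_5 d_5^2 - a_6 d_6^2$) lying in $Q(\df{2,3,6,7})$.

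The final two assertions follow by a direct enumeration. For $\df{2,3,6,7,19}$ and $\df{2,3,6,7,22}$ the only possible values of the new variable when attempting to represent $23$ have absolute value at most $1$, and one checks that none of $23$, $4$, and $1$ lies in $Q(\df{2,3,6,7})$; meanwhile the primes $47$ and $67$ are recovered by a one-line computation using the new coordinate. For $\df{2,3,6,7,20}$ the only admissible values of the new variable for the target $67$ have absolute value at most $1$, and neither $67$ nor $47$ lies in $Q(\df{2,3,6,7})$, while $23$ and $47$ are immediately represented using the coordinate of coefficient $20$. The main technical obstacle is the $\prec_{d,a}$ analysis of $g=\df{2,3,6}$: because $f$ misses three primes rather than one, the good arithmetic progressions must be chosen carefully so as to cover every admissible residue class of $p$ via an accessible $d$ that keeps $p-7d^2$ positive once $p$ exceeds a modest threshold; this may require either passing to a larger modulus or replacing $g$ by another ternary subform such as $\df{2,3,7}$ or $\df{3,6,7}$ if $\df{2,3,6}$ does not yield enough progressions.
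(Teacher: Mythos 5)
Your proposal follows essentially the same route as the paper: take $g=\df{2,3,6}$ with $\alpha=7$, show every sufficiently large prime $p$ satisfies $p-7d^2\in Q(g)$ for a small $d$ chosen according to the residue of $p$ (the paper uses $d\in\{0,2,4,6\}$ depending on $p$ modulo $24$), verify small primes and the exceptions $23,47,67$ directly, and dispatch the five- and six-variable forms by finite checks of the kind you describe. The one thing you missed is that $\df{2,3,6}$ has class number one, so $Q(\gen(g))=Q(g)$ immediately and $g$ represents every nonnegative integer not of the form $4^s(8t+7)$ or $9^s(3t+1)$; hence the $\prec_{d,a}$ analysis you flag as the main technical obstacle is unnecessary here, and your closing worry about larger moduli or switching to another ternary subform (prompted by the three missed primes, which all lie below the direct-check threshold anyway) is moot.
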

\begin{proof}
Note that the class number of $g=\df{2,3,6}$ is $1$. 
Thus $Q(\gen(g))=Q(g)$, and consequently $g$ represents all nonnegative integers except for those of the forms $4^s(8t+7)$ and $9^s(3t+1)$ for some nonnegative integers $s$ and $t$.

Since one may directly check that $p\ra f$ for any prime $p$ such that $p\le 251$ except for $23,47,$ and $67$, we assume that $p\ge 257$.
Then, one may easily check that
$$
p-7\cdot d^2 \ra g,
$$
where
$$
d=\begin{cases}  
0&\ \text{if}\ p\equiv 5,11,17 \Mod {24},\\
2&\ \text{if}\ p\equiv 1, 7, 13 \Mod {24},\\
4&\ \text{if}\ p\equiv 19 \Mod {24},\\
6&\ \text{if}\ p\equiv 23 \Mod {24}.\\
\end{cases}
$$
Therefore, $p \ra f$.
Now, the proof of the last assertion is straight forward.	
%
\end{proof}

\begin{thm}\label{thm235-237}
The quadratic form $\df{2,3,5,19}$ represents all primes but not $43$. Hence, the quadratic form $\df{2,3,5,19,19}$ is prime-universal.
Moreover, the quadratic forms 
$\df{2,3,5,7,a_5}$, $\df{2,3,7,a_4,a_5}$, and $\df{2,3,7,7,7,10}$ listed in Table \ref{HigherCandidate} are prime-universal.	
\end{thm}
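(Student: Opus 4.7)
The plan is to split the theorem into two tasks: establishing the quaternary result that $f_0 := \df{2,3,5,19}$ represents every prime except $43$, from which prime-universality of $\df{2,3,5,19,19}$ is immediate (since $43 = 5\cdot 1^2 + 19\cdot 1^2 + 19\cdot 1^2$ shows $43 \ra \df{2,3,5,19,19}$); and then verifying prime-universality for each of the higher-rank families $\df{2,3,5,7,a_5}$, $\df{2,3,7,a_4,a_5}$, and $\df{2,3,7,7,7,10}$ listed in Table \ref{HigherCandidate} by the same template used throughout Section 2.

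For $f_0$, I would take $g = \df{2,3,5}$ as the ternary sub-form. Since $\gen(g) = \{g, g_1\}$ with $g_1 = \df{1,1,30}$, and the good-vector relations $g_1 \prec_{7,a} g$ for $a \in \{0,3,5,6\}$ are already in hand from the proof of Theorem \ref{thm235alpha}, $g$ represents every positive integer coprime to $3$ whose residue modulo $7$ lies in $\{0,3,5,6\}$. I would first verify by direct computation that every prime $p$ below some explicit bound $N_0$ is represented by $f_0$ except for $43$, and verify non-representability of $43$ by a finite inspection of $(x_1,x_2,x_3,x_4) \in \z^4$ with $2x_1^2+3x_2^2+5x_3^2+19x_4^2 = 43$. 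For $p > N_0$, I would define an integer $d = d(p)$ piecewise on residues of $p$ modulo a small modulus so that $n := p - 19 d^2$ is positive, coprime to $3$, and lies in an allowable residue class modulo $7$; then $n \ra g$ and hence $p \ra f_0$.

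For the remaining families, I would apply the same strategy with the appropriate ternary sub-form: $\df{2,3,5}$ for $\df{2,3,5,7,a_5}$, and $\df{2,3,7}$ for $\df{2,3,7,a_4,a_5}$ and for the sextic $\df{2,3,7,7,7,10}$. The preparatory step here is the genus analysis of $\df{2,3,7}$, namely identifying its class number and genus mates and computing, via the MAPLE-based routine mentioned in Section 1, a family of good-vector relations $h' \prec_{d,a} \df{2,3,7}$ that isolates an arithmetic-progression-friendly subset of $Q(\df{2,3,7})$. Once this is in place, for each form $\df{2,3,c_3,c_4,c_5}$ (and the sextic analog) one finds, in each residue class of $p$ modulo a small modulus, a pair $(x_4,x_5)$ such that $n = p - c_4 x_4^2 - c_5 x_5^2$ lies in the represented subset, forcing $p$ to be represented by the full form; small primes below a form-specific bound are verified directly.

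The principal obstacle is the case-count: many distinct quintic forms, together with the sextic $\df{2,3,7,7,7,10}$, fall under this theorem, and each requires its own explicit piecewise choice of $(d,x_4,x_5)$ keyed to residues of $p$ together with a form-specific computational verification of the small primes. The only analytic subtlety is handling cases where the underlying ternary has class number greater than $1$: there one must trace a representation by a genus mate back to a representation by the ternary itself, in the style of Theorems \ref{thm23521} and \ref{thm23535}. Beyond this bookkeeping, no new input beyond the templates of Theorems \ref{thm235alpha} through \ref{thm236} is expected.
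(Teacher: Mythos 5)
Your overall template is the paper's template, and the quaternary part is fine: the paper disposes of $\df{2,3,5,19}$ in one line by citing the proof of Theorem \ref{thm235alpha} with $\alpha=19$ (every prime $p\ge 36\cdot 19=684$ is represented, the primes below $684$ are checked directly, and $43=5+19+19$ handles $\df{2,3,5,19,19}$), which is materially the same as your $p-19d^2$ scheme. The genuine gap is in your assignment of the ternary sub-form: you propose to treat the family $\df{2,3,5,7,a_5}$ with $g=\df{2,3,5}$ and the mod-$7$ relations $g_1\prec_{7,a}g$, $a\in\{0,3,5,6\}$. Table \ref{HigherCandidate} has $a_5\in\{7,19,28,34\}$, and for $a_5=7$ and $a_5=28$ both complementary coefficients are $\equiv 0 \Mod 7$, so $n=p-7x_4^2-a_5x_5^2\equiv p \Mod 7$ for \emph{every} choice of $(x_4,x_5)$; for primes $p\equiv 1,2,4\Mod 7$ you can therefore never steer $n$ into a certified progression $S_{7,a}$. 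Nor can the missing relations for $a\in\{1,2,4\}$ be supplied: $\df{2,3,5}$ is not regular --- e.g.\ $1\in S_{7,1}$ is represented by the genus mate $\df{1,1,30}$ but not by $\df{2,3,5}$ --- so $Q(\gen(g))\cap S_{7,a}\not\subset Q(g)$ for those residues, and the method is structurally blocked, not merely uncomputed.

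The paper avoids this by running the entire ``Moreover'' part through $g=\df{2,3,7}$ (class number $3$, with mates $g_1=\df{1,3,14}$ and one non-diagonal form): it writes $\df{2,3,5,7,a_5}=\df{2,3,7}\perp\df{\alpha,\beta}$ with $\alpha=5$, $\beta=a_5$, notes both are coprime to $3$, proves $g_1\prec_{3,0}g$ and $g_2\prec_{3,0}g$ so that $g$ represents all odd multiples of $3$, and then chooses $d,e\in\{0,1,2\}$ with $p-\alpha d^2-\beta e^2$ an odd multiple of $3$. Two smaller calibration points for the rest of your plan: first, your fallback of tracing a representation by a genus mate back to the ternary, in the style of Theorems \ref{thm23521} and \ref{thm23535}, is not needed here --- the $\prec_{d,a}$ machinery already absorbs the class-number-$3$ issue; second, be aware that the simple odd-multiple-of-$3$ trick also fails whenever $3$ divides one of the outer coefficients (e.g.\ for $\df{2,3,7,10,12}$ and $p\equiv 2\Mod 3$, since $d^2\not\equiv 2\Mod 3$), which is why the paper must compute the heavier families $g_i\prec_{30,r}g$ ($r\in R_{30}$) and $g_i\prec_{42,r}g$ ($r\in R_{42}$) and, for $\gamma=9,11,13,21$, reduce to the quaternaries $\df{2,3,7,\gamma}$; your generic ``small modulus'' clause can accommodate this, but only after the specific obstruction is recognized.
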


\begin{proof}
Since $19$ is not divisible by $7$, $\df{2,3,5,19}$ represents every prime $p\ge36\times 19=684$ (see the proof of Theorem \ref{thm235alpha}). One may easily check that $\df{2,3,5,19}$ represents all primes less than $684$ except  for $43$. Also, since $\df{2,3,5,19,19}$ represents $43$, it is prime-universal.

Now, we prove ``Moreover part" of the theorem. Let $f=\df{2,3,7,\alpha,\beta}$ be one of the quadratic forms that we should prove it is prime-universal. (for the quadratic form $\df{2,3,5,7,a_5}$, we let $\alpha=5$ and $\beta=a_5$, and we keep the senary quadratic form $\df{2,3,7,7,7,10}$ in mind for a moment).
Since one may easily check that $p\ra f$ for any prime $p$ such that $p\le 13121$, we assume that $p\ge 13127$.
Note that the class number of $g=\df{2,3,7}$ is $3$ and all isometric classes in the genus are
$$
g=\df{2,3,7},\ \ g_1=\df{1,3,14} \ \ \text{and}\ \ 
g_2= \begin{pmatrix} 2&1&1\\1&3&1\\ 1&1&9\end{pmatrix}.
$$
Now, one may easily check that 
$$
g_1\prec_{3,0}g, \ \ \text{and} \ \ g_2\prec_{3,0}g.
$$
Therefore, $Q(\gen (g)) \cap 3\z \subset Q(g)$, and consequently $g$ represents all nonnegative odd integers that are multiple of $3$.
In the case when both $\alpha$ and $\beta$ are not divisible by $3$, then one may easily check that 
$$
p-\alpha\cdot d^2-\beta \cdot e^2 \ra g
$$
for some integers $d,e\in\{0,1,2\}$.
Therefore, $p \ra f=\df{2,3,7,\alpha,\beta}$ in this case, and we may also have $p\ra \df{2,3,7,7,7,10}$.

Now, the remaining quadratic forms $f$ under consideration are
$$
\begin{array}{llll}
\df{2,3,7,9,\beta} &\text{with } \beta=11,13, & \df{2,3,7,10,\beta} &\text{with } \beta=12,15,18,21,\\
\df{2,3,7,11,\beta} &\text{with } \beta=12,15,&\df{2,3,7,12,13}, &  \df{2,3,7,13,15}.\\
\end{array}
$$
One may check by a computer program that
$$
g_1\prec_{30,r}g, \ \ \text{and} \ \ g_2\prec_{30,r}g
$$
for any integer $r$ in $R_{30}=\{0,2,3,6,8,9,10,12,15,18,20,21,22,24,27,28\}$.
Therefore, $Q(\gen(g))\cap \{m : m\equiv r \Mod{30} \text{ for some }r\in R_{30}\}\subset g$, and consequently $g$ represents all nonnegative integers in the set
$$
M_{30}=\{m \mid m \not \equiv 0,6,22,24 \Mod{32}, \ m \equiv r \Mod{30} \text{ for some } r\in R_{30}\}.
$$
Using this, for each $\gamma=9,11,13,21$, one may check that 
$$
p-\gamma\cdot d^2 \ra g
$$
for some $0\le d \le 25$, defined according as the residue of $p$ modulo $32\cdot 15$.
Therefore, $p \ra \df{2,3,7,\gamma}$.
This proves the prime-universality of $f$ except for those with $\alpha=10$ and $\beta=12,15,18$, namely,
$$
\df{2,3,7,10,12},\ \ \df{2,3,7,10,15},\ \ \text{and} \ \ \df{2,3,7,10,18}.
$$
To prove in these cases, one may check by a computer program that 
$$
g_1\prec_{42,r}g, \ \ \text{and} \ \ g_2\prec_{42,r}g
$$
for any $r$ in $R_{42}=\{0, 3, 6, 7, 9, 12, 14, 15, 18, 21, 24, 27, 28, 30, 33, 35, 36, 39\}$. Therefore, $Q(\gen(g))\cap \{m : m\equiv r \Mod{42} \text{ for some }r\in R_{42}\}\subset g$, and consequently $g$ represents all nonnegative integers in the set
$$
M_{42}=\{m \mid m \not \equiv 0,6,22,24 \Mod{32}, \ m \equiv r \Mod{42} \text{ for some } r\in R_{42}\}.
$$
Using this, for any $\alpha=10$ and $\beta=12,15,18$,
one may check that 
$$
n=p-\alpha\cdot d^2 -\beta\cdot e^2 \ra g
$$
for some integers $d\in\{0,1,2\}$ and $0\le e \le 6$, defined according as the residue of $p$ modulo $32\cdot 21$.
Therefore, $n\ra f$ in this case.
This completes the proof.
\end{proof}

\begin{rmk}\label{rmk235-237}
From the proof of Theorem \ref{thm235-237}, one may verify that
$$
\begin{array}{lll}
\df{2,3,7,9} 		&\text{represents all primes but not } 13,97,\\
\df{2,3,7,11}	 	&\text{represents all primes but not } 17,\\
\df{2,3,7,13} 	&\text{represents all primes but not } 17,\\
\df{2,3,10,21} 	&\text{represents all primes but not } 13,17,43,47.\\
\end{array}
$$
\end{rmk}

 Now, we are ready to prove Theorem \ref{HigherproperPU}.

\begin{proof}[Proof of Theorem \ref{HigherproperPU}]
The properness can be checked through a straight forward calculation.
The prime-universality of each diagonal quadratic form in Table \ref{HigherCandidate} follows from Theorems \ref{thm222}-\ref{thm235-237}.
\end{proof}

\begin{cor}
Let $f$ be a diagonal quadratic form and let
$$
\mathcal{S}:=\{2,3,5,7,13,17,23,41,43,67\}.
$$
The diagonal quadratic form $f$ represents every prime in $\mathcal{S}$  if and only if $f$ is prime-universal.
Moreover, the set $S$ is minimal in the sense that any proper subset of $\mathcal{S}$ does not guarantee the prime-universality. 
\end{cor}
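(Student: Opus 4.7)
The forward implication is immediate, since a prime-universal form represents every prime. For the converse, suppose $f$ represents every prime in $\mathcal{S}$. The plan is to argue that $f$ must contain a proper prime-universal diagonal form as a subform, by re-running the escalation of Theorem \ref{ThmHigherescalation} starting from $f$. The essential ingredient is the complete classification of proper prime-universal diagonal forms (Table 1 of \cite{dw} for rank at most $4$, and Table \ref{HigherCandidate} for rank at least $5$), which yields a finite escalation tree. The key fact I would verify, by reading off the statements of Theorems \ref{start}--\ref{thm23535} and Theorems \ref{thm222}--\ref{thm235-237} together with the analysis in \cite{dw}, is that the prime truant of every non-PU internal node of the tree lies in $\mathcal{S}$. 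Granting this, at each node the requirement that $f$ represent the current truant forces $f$ to supply the next coefficient dictated by the tree, and the escalation starting from $f$ must terminate at a proper PU leaf contained in $f$.

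For the minimality claim, for each $p \in \mathcal{S}$ I would produce a diagonal form that represents every prime in $\mathcal{S} \setminus \{p\}$ but not $p$. For the six primes $p \in \{13, 17, 23, 41, 43, 67\}$ the near-PU predecessors already featured in the preceding theorems serve directly: the forms $\langle 2,2,2,3 \rangle$, $\langle 2,2,3,17 \rangle$, $\langle 2,3,3,4 \rangle$, $\langle 2,3,4,6 \rangle$, $\langle 2,3,5,19 \rangle$, and $\langle 2,3,6,7,20 \rangle$ each represent every prime except the single prime $17$, $41$, $13$, $23$, $43$, and $67$, respectively. For the remaining four primes I would use ad hoc constructions: $\langle 3,5,7,13,17,23,41,43,67 \rangle$ fails to represent $2$ (every coefficient exceeds $2$); $\langle 2,2,5,7,13,17,23,41,43,67 \rangle$ fails to represent $3$ (its values are either even or at least $5$); $\langle 1,2,7,13,17,23,41,43,67 \rangle$ fails to represent $5$ (since $x^{2}+2y^{2}=5$ has no integer solution and the remaining coefficients exceed $5$); and $\langle 2,3,13,17,23,41,43,67 \rangle$ fails to represent $7$ (since $2x^{2}+3y^{2}=7$ has no integer solution and the remaining coefficients are at least $13$). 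Each of these forms manifestly represents the other primes of $\mathcal{S}$.

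The main obstacle is the bookkeeping in the converse: I must verify, case by case, that no prime outside $\mathcal{S}$ ever appears as a truant anywhere in the escalation tree. Since the tree is finite and entirely exhibited in the preceding sections, this is a bounded finite check rather than a conceptual difficulty.
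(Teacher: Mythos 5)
Your proposal is correct, and for the equivalence it follows essentially the paper's own route: the paper's entire proof of the converse is the remark that repeating the escalation of Theorem \ref{ThmHigherescalation} under the weaker hypothesis that $f$ represents only the primes of $\mathcal{S}$ yields the same conclusion, which is exactly your ``every truant at a non-PU node of the tree lies in $\mathcal{S}$'' check (and, like you, the paper leaves that finite verification implicit, including the truants from the rank $\le 4$ escalation of \cite{dw}). Where you genuinely diverge is the minimality half. For $p\in\{13,17,23,41,43,67\}$ you use the same witnesses the paper records in Table \ref{exceptOne} ($\df{2,3,3,4}$, $\df{2,2,2,3}$, $\df{2,3,4,6}$, $\df{2,2,3,17}$, $\df{2,3,5,19}$, $\df{2,3,6,7,20}$). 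For $p\in\{2,3,5,7\}$, however, the paper exhibits $\df{1,3,4}$, $\df{1,1,6}$, $\df{1,2,6,10}$, and $\df{1,1,1,9}$, each representing \emph{all} primes except the single one in question; verifying this costs genus-theoretic input (the class-number-one genera of $\df{1,3,4}$, $\df{1,1,6}$, $\df{1,2,6}$, $\df{1,1,1}$). You instead pad with large coefficients, e.g.\ $\df{3,5,7,13,17,23,41,43,67}$ for $p=2$, so that representing $\mathcal{S}\setminus\{p\}$ is immediate (each required prime is a coefficient or a trivial sum such as $5=2+3$) and the failure at $p$ is a finite inspection: minimum $3$ rules out $2$, parity below $5$ rules out $3$, and the insolubility of $x^2+2y^2=5$ and $2x^2+3y^2=7$ rules out $5$ and $7$. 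I checked all four of your witnesses and they are valid; they establish exactly what the stated minimality requires --- no proper subset of $\mathcal{S}$ forces prime-universality --- with no arithmetic machinery, whereas the paper's choices buy the stronger and more striking fact that each prime of $\mathcal{S}$ is the unique prime missed by some diagonal form. Both arguments are complete modulo the same bookkeeping in the escalation, which neither you nor the paper writes out in full.
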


\begin{proof}
Note that if we repeat the proof of Theorem \ref{ThmHigherescalation} under the assumption that $f$ represents all  primes in $S$, we arrive at the same conclusion.
Also, note that for each prime $p \in S$, some quadratic forms representing all primes except $p$ are given in Table \ref{exceptOne}.
The representabilities of primes for each of the first four quadratic forms $\df{1,3,4}$, $\df{1,1,6}$, $\df{1,2,6,10}$, and $\df{1,1,1,9}$ in Table \ref{exceptOne} can be proved as usual, since the class number of each of the forms $\df{1,3,4}$, $\df{1,1,6}$, $\df{1,2,6}$, and $\df{1,1,1}$ is $1$.
Representabilities of primes for each of the other quadratic forms are proved in Theorems \ref{thm222}-\ref{thm235-237} and Remark \ref{rmk235-237}. Therefore, the set $S$ is minimal.
\end{proof}

\begin{table}[ht]
	\caption{Diagonal forms representing all primes but only one} \label{exceptOne}
	\begin{center}
		\begin{tabular}{c|c}
			\Xhline{1pt}
			$f$ & Exception of $f$\\ 
			\hline
$\df{1,3,4}$	&	 $2$\\

$\df{1,1,6}$	&	 $3$\\
\hline
$\df{1,2,6,10}$	&	 $5$\\

$\df{1,1,1,9}$	&	 $7$\\

$\df{2,3,3,4}$, $\df{2,3,4,12}$ &	$13$\\
\hline
$\df{2,2,2,3}$, $\df{2,3,4,4}$, $\df{2,3,4,7}$,	&	 \multirow{2}{*}{$17$}\\
$\df{2,3,7,11}$, $\df{2,3,7,13}$&\\
\hline
$\df{2,3,4,6}$, $\df{2,3,4,10}$, $\df{2,3,4,13}$, &  \multirow{2}{*}{$23$} \\ 
$\df{2,3,6,7,19}$, $\df{2,3,6,7,22}$	&  \\
\hline
$\df{2,2,3,17}$			&$41$\\

$\df{2,3,5,19}$			&$43$\\

$\df{2,3,6,7,20}$		&$67$\\			
\Xhline{1pt}
		\end{tabular}
	\end{center}
\end{table}

\begin{rmk} {\rm As pointed out by \cite{dw},  Bhargava asserted that if any quadratic form, which is not necessarily diagonal, represents
$$
2, \ 3, \ 5, \ 7, \ 11, \ 13, \ 17, \ 23, \ 29, \ 31, \ 37, \ 41, \ 43, \ 67, \ 73,
$$
then it is prime-universal.  However,  no proof of this has appeared in the literature to the author's knowledge.}  
\end{rmk}

\end{document}